\newcommand{\wt}{\widetilde}
\newcommand{\wh}{\widehat}
\newcommand{\f}{\mathbb}
\newcommand{\R}{\mathbb{R}}
\renewcommand{\P}{\mathbb{P}}
\newcommand{\E}{\mathbb{E}}
\DeclareMathOperator{\rank}{rank}
\DeclareMathOperator{\mvec}{vec}
\DeclareMathOperator{\diag}{diag}
\theoremstyle{definition}
\newtheorem{definition}{Definition}
\theoremstyle{plain}
\newtheorem{theorem}{Theorem}
\newtheorem{lemma}{Lemma}
\newtheorem{remark}{Remark}
\newtheorem{conjecture}{Conjecture}
\newtheorem{corollary}{Corollary}
\title{Role extraction for digraphs via neighbourhood pattern similarity}
\author{Giovanni Barbarino\thanks{Department of Mathematics and Systems Analysis, Aalto University, Finland.},
	Vanni Noferini\thanks{Department of Mathematics and Systems Analysis, Aalto University, Finland.}, 
	Paul Van Dooren\thanks{Department of Mathematical Engineering, Universit\'e catholique de Louvain, Belgium}
}
\date{}
\begin{document}
\maketitle
\begin{abstract}
	We analyse the recovery of different roles in a network modelled by a directed graph, based on the so-called  Neighbourhood Pattern Similarity approach. Our analysis uses results from random matrix theory to show that, when assuming that the graph is generated as a particular Stochastic Block Model with Bernoulli probability distributions for the different blocks, then the recovery is asymptotically correct when the graph has a sufficiently large dimension. Under these assumptions there is a sufficient gap between the dominant and dominated eigenvalues of the similarity matrix, which guarantees the asymptotic correct identification of the number of different roles. We also comment on the connections with the literature on Stochastic Block Models, including the case of probabilities of order $\log(n)/n$ where $n$ is the graph size. We provide numerical experiments to assess the effectiveness of the method when applied to practical networks of finite size.
\end{abstract}
%PACS: 02.10.Ox 02.50.Cw 02.10.Yn 02.10.Ud

% insert suggested keywords - APS authors don't need to do this
%\keywords{}

%\maketitle must follow title, authors, abstract, and keywords

% body of paper here - Use proper section commands
% References should be done using the \cite, \ref, and \label commands
\section{Introduction}

The analysis of large graphs frequently assumes that there is an underlying structure in the graph that allows us to represent it in a simpler manner. A typical example of this is the detection of communities, which are groups of nodes that have most of their connections with other nodes of the same group, and few connections with nodes of other groups. Various measures and algorithms have been developed to identify  community structures \cite{Girvan02}
and many applications have also been found for these model structures \cite{Fortunato:2010,GSSPLNA:2010, Newman:2010,POM:2009}. Yet, many graph structures cannot be modelled using communities: for example, arrowhead and tree graph structures, which appear in overlapping communities, human protein-protein interaction networks, and food and web networks \cite{GSSPLNA:2010,KKKR:2002,PSR:2010}. These more general types of network structures can be modelled as role structures, and the process of finding them is called the role extraction problem, or block modelling \cite{BDVB:2013,DBF:2005,Lei15,Reichardt:2009,RW:2007}.
The role extraction problem is a generalization of the community detection problem and it determines a representation of a network by a smaller structured graph, where nodes are grouped together based upon their interactions with nodes in either the same group or in different groups called roles. If no a priori information is available, one needs to verify all possible group and role assignments in order to determine the best role structure for the data, which leads to an NP-hard problem \cite{BRGGHNW:2006,DBF:2005} for both the community detection problem and the more general role extraction problem.

There are many algorithms proposed for community detection, both for directed and undirected graphs \cite{Duch05,Girvan02,Jing21,Li21,Satuluri11,FL:2009}, but they 
often do not state any conclusive results
about the exact recovery of communities, because they make no statistical assumption about the underlying model of the graph. On the other hand, if one assumes that the adjacency matrix of the graph is a sample of a random matrix that follows certain rules, then the problem of recovering the correct underlying block structure may become tractable. 
The Stochastic Block Model (SBM) is precisely such a model: the interactions between all nodes of a particular group with all nodes of another group follow exactly the same distribution \cite{Holland:1983}. 
There is a considerable literature on SBM \cite{Gao18,Lei15,Zhang16}, including variants that address diagonal scaling of the SBM \cite{Qin13}.

To deal with this problem, researchers
have proposed a variety of procedures, which vary greatly in their degrees
of statistical accuracy and computational complexity. See for example
modularity maximization \cite{NG:2004}, 
likelihood methods \cite{Blondel08,BC:2009,ACW:2012,ACL:2013,CDP:2012},
Infomod methods \cite{BR:2008, BR:2011},
Monte Carlo methods \cite{LMRYZ:2011,P:2014},
method of moments \cite{AGHK:13}, 
belief propagation \cite{Decelle:2011}, 
convex optimization \cite{CSX:2012} and its variants \cite{C:2010,CCT:2012},
methods based on mixture models \cite{LN:2007,MR:2008},
the clique percolation method \cite{PDFV:2005},
spectral embeddings \cite{ALPST:2014}
and hierarchical clustering through minimum description length \cite{P:2013,P:2014X,BR:2007} 
or Bayesian model selection \cite{CL:2015,MRV:2010}.

A class of algorithms that has been largely employed in the past years for such purpose are the so-called spectral methods  
\cite{Karrer11,V:2007,BKSX:2011,FPST:2012, FPSTV:2013}. Broadly speaking, a spectral method first performs an eigendecomposition of a symmetric matrix encoding the properties of the graph. Then the community membership is inferred by applying a clustering algorithm, typically $K$-means, to the rows of the matrix formed by the first few leading eigenvectors. Spectral clustering is easier to implement and computationally less demanding than many other methods, which amount to computationally intractable combinatorial searches. From a theoretical standpoint, spectral clustering
has been shown to enjoy good theoretical properties in stochastic
block models \cite{Rohe11,J:2015,BS:2015}. In the
computer science literature, spectral clustering is also a standard procedure
for graph partitioning and for solving the planted partition model, a special
case of the SBM \cite{JNW:2001}.

As their first step requires the eigendecomposition of a symmetric matrix, spectral methods are commonly applied to undirected graphs.
Moreover, when they do consider 
directed graphs, their analysis does not include the recovery of the underlying block structure \cite{Nowicki01}.

In this paper, we will show that a particular method, using the so-called Neighbourhood Pattern Similarity (NPS) matrices \cite{BVD:2014,Marchand21}, allows us to give a positive answer to the following question: Can we recover asymptotically the block structure of a general directed graph with a stochastic block model structure? A NPS matrix is a real symmetric positive semi-definite matrix,  also for directed graphs, and therefore has real eigenvalues and eigenvectors. 
We then show that for sufficiently large graphs, the gap between dominant and dominated eigenvalues allows a convergent recovery of which nodes are associated with the different roles in the model. 
The nearest results available in the literature are the successful extraction of the correct roles in SBM for the community detection problem of undirected graphs \cite{Lasse3,Lei15}, and the use of spectral clustering for the directed role extraction problem \cite{Qing21}, in which a different type of stochastic block model is used. 
The present paper extends the asymptotic analysis of the general role modelling problem to specific symmetrizations of the standard SBM model for directed graphs, the NPS matrices, for which a correctness result is still missing in the literature. This is in particular one of the few existing results of correctness for the spectral clustering algorithm applied to directed graph with a SBM structure. Furthermore, our results can be seen as covering a whole class of methods, in the sense that our asymptotic analysis applies to all the admissible values of the scaling factor $\beta$ and all NPS matrices $S_k$ (including both any finite value of $k$ and the limit $S=\lim_{k \rightarrow \infty} S_k$, which is the NPS matrix); see Subsection \ref{sec:roleextractionbyS} for the definitions of $\beta$ and $S_k$. While in this paper we focus on the theoretical analysis of the method, determining optimal values of $k$ and $\beta$ for a practical implementation of the algorithm to analyse actual graphs is an interesting possible subject of future research.

In Section \ref{sec:preliminaries} we go over several preliminaries related to graphs, random matrices, stochastic block models and role modelling. 
Section \ref{sec:Spectral_bounds} then yields the spectral bounds for the NPS matrix associated with the graph (as well as for the matrices $S_k$ whose limit is the NPS matrix) and in Section \ref{sec:Clustering_error} we describe the asymptotic behaviour of the clustering error.  
In Section \ref{sec:numerics} we give a few numerical experiments illustrating our theoretical analysis, and we conclude with a few final remarks in Section \ref{sec:conclusion}. 
Several technical proofs are moved to the Appendices for the sake of  readability.

\section{Preliminaries} \label{sec:preliminaries}

\subsection{Graph theory and role extraction}

An unweighted \emph{directed graph}, or digraph, $G=(V,E)$ is an ordered pair of sets where the elements of $V=[n]$ are called \emph{vertices}, or nodes, while the elements of $E \subseteq V \times V$ are called \emph{edges}, or links.
A walk of length $\ell$ on the digraph $G$ from $i_1$ to $i_{\ell+1}$ is a sequence of vertices of the form $i_1,i_2,\dots,i_{\ell+1}$ such that for all $j=1,\dots,\ell$, $(i_j,i_{j+1}) \in E$. $G$ is said to be strongly connected if for all $i,j \in V$ there is a path on $G$ from $i$ to $j$.

The adjacency matrix of an unweighted digraph is defined as 
\[ 
A \in 
\R^{n \times n},
\qquad A_{ij} = 
\begin{cases}
1 \ &\text{if} \ (i,j) \in E;\\
0 \ &\text{otherwise}.
\end{cases}
\]
In particular, $A$ is a non-negative matrix and 	
$A \in \{0,1\}^{n \times n}$.
It is well known that $A$ is irreducible if and only if $G$ is strongly connected; in that case, the Perron-Frobenius spectral theory for irreducible non-negative matrices applies. Manifestly, there is a bijection between adjacency matrices and digraphs. Moreover, two digraphs are isomorphic (i.e. they coincide up to a relabelling of the vertices) if and only if their adjacency matrices are permutation similar.

Given a graph $G$ with adjacency matrix $A$, the problem of role extraction consists in finding a positive integer $r \leq n$ and an \emph{assignment} function $\xi:[n] \rightarrow [r]$ such that $A$ can be well approximated  by an ideal adjacency matrix $E$ such that $E_{ij}$ only depends on $\xi(i)$ and $\xi(j)$. Equivalently, if $\pi$ is any permutation that reorders the nodes such that nodes of the same group are adjacent to each other, and $P$ is the corresponding permutation matrix, then $P^T A P$ is approximately constant in the $r$ blocks induced by the assignment. One can then associate with it a so-called {\em ideal} adjacency matrix $A_{id}$ as illustrated in Figure 1~: for the blocks of $A$ where 1 dominates, put all elements of the corresponding block in $A_{id}$ equal to 1 and
for the blocks where 0 dominates, put them all equal to 0 in $A_{id}$. In doing so, the nodes in each block of $A_{id}$ are {\em regularly equivalent} \cite{EB:1994}, i.e. they have the same parents and the same children \cite{BVD:2014,Marchand21}.
The above approximation problem for $P^TAP$ can thus be viewed as finding a nearby regularly equivalent 
graph to a given graph. 

\begin{figure}[ht] \label{fig:ideal}
	\begin{center}
		\includegraphics[width=.5\textwidth ]{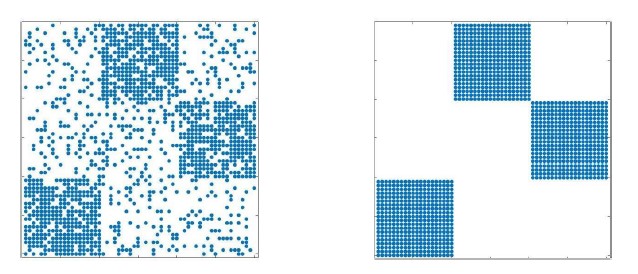} 
		\caption{Associating a regularly equivalent graph $A_{id}$ to the permuted graph $P^TAP$} 
	\end{center}
\end{figure}

The role extraction problem can be also generalized into finding clusters of nodes $\mathcal C_1,\dots,\mathcal C_q$, or equivalently an assignment function, such that for a given node $x$ in cluster $\mathcal C_a$, the number of edges between $x$ and the cluster $C_b$ depends only on $a$ and $b$. In the next section we will find that this problem is better formalized by the  Stochastic Block Model.

\subsection{Stochastic Block Model}\label{sec:sbm}

Let $(\Omega,\mathcal{F},\P)$ be a probability triple and consider the space of random variables 
$\Omega \rightarrow \{0,1\}^{a \times b}$.
A random digraph $G(\omega)$, $\omega \in \Omega$, is a graph whose adjacency matrix $A(\omega)$ is one such random variable. We denote the expectation of a random matrix $A$ by $\E[A]$.
We construct a random unweighted digraph as follows:
\begin{enumerate}
	\item The nodes are partitioned in $q$ clusters of nodes, $\mathcal C_1, \mathcal C_2, \dots, \mathcal C_q$, of size $m_1 n$, $m_2 n$, $\dots,$ $m_q n$ respectively.
	\item There is an edge between a node in cluster $\mathcal C_a$ to a node in cluster $\mathcal C_b$ with probability $p_{a,b} = f(n) \theta_{a,b}$, where $\theta_{a,b}$ depend only on $a$ and $b$ and $\max_{a,b} \theta_{a,b}=1$.
\end{enumerate}

Since $p_{a,b}$ are probabilities, necessarily $f(n) = O(1)$, but from classical information theory we know that
exact recovery for the clusters requires \cite{Abbe16,Mossel15}
\begin{equation}\label{eq:fn}
	nf(n) \to \infty,
\end{equation}
and it is also more restrictive than the sufficient condition for clustering detection \cite{Massoulie14}. 

The adjacency matrix $A_n$ of such a random graph is an $mn \times mn$ random matrix, where $m=\sum_{i=1}^q m_i$.
Suppose that $m_i$ may vary with $n$, but for every $n,i$ we have $0< m_{min}\le m_i\le m_{max}$ where $m_{min}, m_{max}, q, \theta_{a,b}$ do not depend on $n$. As a consequence $m$ may vary, but it is always bounded between absolute constants $qm_{min}\le m\le qm_{max}$. 
Suppose $i \in \mathcal C_a, j \in \mathcal C_b$: then, $A_{ij}$ is distributed as a Bernoulli variable centered on $\{0,1 \}$ with $\P(1)=p_{ab}$. 
In this section, we assume that the nodes of the same cluster are adjacent to each other, in order to simplify the notation. This does not affect the generality of our results. 
%% CHECK

Denoting by $ \mathbf{1}_k \in \R^k$ the vector of all ones and by $M_n = \E[A_n]$, then
\[  M_n=f(n) Z_n \Upsilon Z_n^T    \]
where 	
\begin{align*}
	Z_n = &\bigoplus_{i=1}^q \mathbf{1}_{m_in} \in \R^{mn \times q}\\ \Upsilon \in \R^{q \times q}, \quad & \Upsilon_{ab}=\theta_{ab} \ \forall \ 1 \leq a,b \leq q.
\end{align*}
$M_n$ is a deterministic matrix with precisely $s:= \rank(f(n)\Upsilon)\le q$ nonzero singular values:
if $D = \diag(\sqrt {m_i},\dots,\sqrt{m_q})$, then $\wt Z_n := Z_nD^{-1}/\sqrt n$ has orthogonal columns and the nonzero singular values of $M_n$ are those of $nf(n)D\Upsilon D$. We have in particular that
\begin{align}\label{eq:Mn}
	m_{min}  \le
	\frac{\sigma_i(M_n)}{ nf(n) \sigma_i(\Upsilon)}
	\le  m_{max}&\qquad  i=1,\dots,s, \\
	\nonumber \quad \sigma_i(M_n) = 0& \qquad \forall \ i >s.
\end{align}
Analogously, $[M_n \,\, M_n^T]$ has precisely $r:= \rank(f(n)[\Upsilon\,\,\Upsilon^T])\le q$ nonzero singular values with
\begin{align}\label{eq:MMt}
	m_{min}  \le
	\frac{\sigma_i([M_n \,\, M_n^T])}{ nf(n) \sigma_i([\Upsilon\,\,\Upsilon^T])}
	\le  m_{max}&\qquad  i=1,\dots,r, \\
	\nonumber \sigma_i(M_n) = 0 &\qquad \forall \ i >r.
\end{align}
The above scenario is what arises in the theory of Stochastic Block Model, but in most references the matrix $\Upsilon$ is taken symmetric. In the following sections we will analyse the model described above, together with a spectral method designed to extract the clusters, which will be called roles, through the use of a similarity matrix $S$. For this reason, we report here a result we will need in our arguments about the matrix $Y_n:= A_n - M_n$.

\begin{theorem}\label{thm:normtao}\cite[Remark 5.19]{Bai2}\cite[Corollary 2.3.5]{taoblog}
	Let $E_N$ be $N \times N$ random matrices with independent, mean zero, and uniformly bounded entries. Suppose that $\sigma^2$ bounds the second moments of all entries, independently on $N$. In this case, 
	\[
	\limsup_{N\to\infty} \left\| \frac 1 {\sqrt N} E_N \right\|\le 2\sigma 
	\]
	almost surely. 
	%	Moreover, there exist $c_1,c_2>0$ such that
	%	\[ \P [ \| E_N \| > t \sqrt{N} ] \leq c_1 \exp(-c_2 t N)\]
	%	for all $t\ge c_1$. 
\end{theorem}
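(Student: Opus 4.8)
The plan is to run the classical moment (trace) method. Since $\|E_N\|=\sqrt{\lambda_{\max}(E_NE_N^T)}$, it suffices to prove that, almost surely, $\limsup_{N\to\infty}\lambda_{\max}(N^{-1}E_NE_N^T)\le 4\sigma^2$; the constant is $2\sigma$ rather than $\sigma$ precisely because we are at the Marchenko--Pastur edge of $E_NE_N^T$, not the semicircle edge. Write $M_N:=E_NE_N^T$. Because $M_N$ is positive semidefinite, $\lambda_{\max}(M_N)^k\le\operatorname{tr}(M_N^k)$ for every $k\in\mathbb N$, so Markov's inequality gives, for any $\varepsilon>0$ and any $k$ (possibly depending on $N$),
\[
\P\!\left(\lambda_{\max}(M_N)>(2\sigma+\varepsilon)^2 N\right)\le\frac{\E[\operatorname{tr}(M_N^k)]}{\big((2\sigma+\varepsilon)^2 N\big)^k}.
\]

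The heart of the proof is the estimate $\E[\operatorname{tr}(M_N^k)]\le (1+o(1))\,C_k\,\sigma^{2k}N^{k+1}$, where $C_k=\binom{2k}{k}/(k+1)\le 4^k$ is the $k$-th Catalan number, uniformly for $k=O(\log N)$. One expands $\operatorname{tr}(M_N^k)$ as a sum over closed walks $i_1\to j_1\to i_2\to j_2\to\cdots\to i_k\to j_k\to i_1$ on the bipartite index set, and uses independence and mean zero to discard every walk in which some (bipartite) edge is traversed exactly once. The surviving walks split into: (i) those in which every edge is traversed exactly twice, which by the standard bijection with Dyck paths number $C_k$ and admit $N^{k+1}(1+o(1))$ distinct vertex labellings, each contributing at most $\prod_{\text{edges}}\E[E_{ij}^2]\le\sigma^{2k}$; and (ii) walks with an edge of multiplicity $\ge 3$ or with fewer than $k+1$ distinct vertices, whose total contribution is negligible relative to the main term when $k=O(\log N)$. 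It is exactly in estimating class (ii) that the hypothesis of uniformly bounded entries is used: if $|E_{ij}|\le B$ almost surely then $\E[|E_{ij}|^p]\le B^{p-2}\sigma^2$ for all $p\ge 2$, which keeps the high-multiplicity contributions under control. This combinatorial bookkeeping is the main obstacle, but it is entirely standard and is the content of the cited references.

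Combining the two displays, using $\E[\operatorname{tr}(M_N^k)]\le 2\cdot 4^k\sigma^{2k}N^{k+1}$ for $N$ large, yields
\[
\P\!\left(\lambda_{\max}(M_N)>(2\sigma+\varepsilon)^2 N\right)\le 2N\left(\frac{4\sigma^2}{(2\sigma+\varepsilon)^2}\right)^{k}=:2N\rho^k,\qquad \rho=\rho(\varepsilon)<1.
\]
Choosing $k=k(N)=\lceil 3\log N/\log(1/\rho)\rceil$, which is $O(\log N)$ so that the moment estimate applies, the right-hand side is at most $2N^{-2}$, hence summable in $N$. By the Borel--Cantelli lemma, almost surely $\lambda_{\max}(M_N)\le(2\sigma+\varepsilon)^2 N$ for all large $N$; intersecting these events over a sequence $\varepsilon_\ell\downarrow 0$ gives $\limsup_{N\to\infty}\lambda_{\max}(N^{-1}M_N)\le 4\sigma^2$ almost surely, i.e. $\limsup_{N\to\infty}\|N^{-1/2}E_N\|\le 2\sigma$ almost surely. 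One could alternatively symmetrise by passing to the $2N\times 2N$ Hermitian dilation with off-diagonal blocks $E_N$ and $E_N^T$ and reducing to a Wigner-type moment estimate; the two routes are equivalent.
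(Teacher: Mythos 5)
This theorem is not proved in the paper at all --- it is quoted from the cited references (Bai--Silverstein and Tao), whose proofs are precisely the moment/trace method you sketch, so your argument is essentially the same as the one behind the citation and its structure (Catalan-number bound on $\E[\operatorname{tr}((E_NE_N^T)^k)]$ for $k=O(\log N)$, Markov, Borel--Cantelli, intersection over $\varepsilon_\ell\downarrow 0$) is sound, with the one genuinely hard step --- the uniform control of high-multiplicity walks --- correctly identified and deferred to the references. One small caveat: your closing remark that the Hermitian-dilation route is ``equivalent'' is only true if the moment computation on the $2N\times 2N$ dilation is done with the zero diagonal blocks taken into account; applying the Wigner bound to the dilation naively gives only $2\sqrt{2}\,\sigma$, which is exactly the non-sharp constant the paper itself obtains for $\|[Y_n\ Y_n^T]\|$ and tries to improve in Conjecture 1.
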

Since the entries of $E_n = Y_n/\sqrt{f(n)}$ have variance bounded by $\max_{i,j} \Upsilon_{i,j}=1$, we get that  
\begin{equation}\label{eq:Y}
	\|Y_n\|^2\le \delta^2  :=4mnf(n)
\end{equation}
when $n$ is big enough.
%, and there exists a constant $C$ such that with high probability  $\|Y_n\|^2\le Cmnf(n)$. 
In what follows, we will  bound the norm of $[Y_n\,\, Y_n^T]$ with $\sqrt 2 \delta = 2\sqrt 2 \sqrt {mnf(n)}$, but
the constant $2\sqrt 2$ here is not sharp. In fact, both Theorem 4.1 in \cite{dep} and the experiments we will present suggest that the result holds with the tighter constant $1+\sqrt 2$, following the classical bound on the Marchenko-Pastur distribution. 
Since there is no such result in literature, we formulate it here as a conjecture.

\begin{conjecture}\label{conj}
	Let $Z_N$ be $N \times N$ random matrices with independent, mean zero, and uniformly bounded entries. Suppose that $\sigma^2$ bounds the second moments of all entries, independently on $N$. If we call $X_N: = [Z_N\,\,Z_N^T]$, then  
	\[
	\limsup_{N\to\infty} \left\| \frac 1 {\sqrt {2N}} X_N \right\|\le \left(1 + \sqrt{\frac 12} \right)\sigma 
	\]
	almost surely. Moreover, if every entry has the same second moment $\sigma^2$, the bound is attained. 
\end{conjecture}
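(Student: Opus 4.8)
The plan is to reduce the operator norm of $X_N=[Z_N\,\,Z_N^T]$ to the top eigenvalue of a symmetric matrix, to pin down the limiting value through free probability (equivalently, the Marchenko--Pastur law), and to control the spectral edge by a high-order trace-moment estimate that yields simultaneously the sharp upper bound in the general case and the matching lower bound when the second moments coincide. Since $Z_N$ is real, $Z_N^T=Z_N^*$ and
\[
\|X_N\|^2=\|X_NX_N^T\|=\lambda_{\max}\!\left(Z_NZ_N^*+Z_N^*Z_N\right),
\]
which also equals $\|B_N\|^2$ for the $3N\times 3N$ symmetric matrix
\[
B_N=\begin{pmatrix}0&Z_N&Z_N^T\\ Z_N^T&0&0\\ Z_N&0&0\end{pmatrix},
\]
because $B_N^2$ is block diagonal with the two blocks $Z_NZ_N^*+Z_N^*Z_N$ and $X_N^*X_N$, which share the same nonzero spectrum. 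Decomposing $Z_N=H_N+K_N$ into symmetric and skew-symmetric parts one also has $Z_NZ_N^*+Z_N^*Z_N=2(H_N^2-K_N^2)$, a sum of two positive semidefinite matrices whose joint fluctuations are exactly what must be understood.

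To identify the constant I would use that $Z_N/\sqrt N$ converges in $*$-distribution to $\sigma c$ with $c$ a standard circular element, so that $\tfrac1N(Z_NZ_N^*+Z_N^*Z_N)\to\sigma^2(cc^*+c^*c)$ in distribution. Writing $c=(s_1+is_2)/\sqrt2$ with $s_1,s_2$ free standard semicircular elements gives $cc^*+c^*c=s_1^2+s_2^2$; since $s_1^2$ and $s_2^2$ are free and each obeys the Marchenko--Pastur (free Poisson) law of rate $1$, their free convolution is the free Poisson law of rate $2$, supported on $[(1-\sqrt2)^2,(1+\sqrt2)^2]$. Equivalently, $\tfrac1NX_NX_N^T$ has the same limiting spectral distribution as $\tfrac1NGG^*$ for a genuine $N\times2N$ matrix $G$ of i.i.d.\ entries of the same variance, i.e.\ the transpose-coupling of the two halves of $X_N$ is invisible in the limit. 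In particular the right edge of the limiting spectrum is $\sigma^2(1+\sqrt2)^2$, which gives $\|X_N\|/\sqrt{2N}\to\sigma(1+\sqrt2)/\sqrt2=\left(1+\sqrt{1/2}\right)\sigma$; and on the combinatorial side, in the trace expansion of $(Z_NZ_N^*+Z_N^*Z_N)^k$ the leading contribution comes from the non-crossing pairings that match a $Z_N$ with a $Z_N^T$, and these reproduce precisely the free Poisson$(2)$ moments.

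The technical heart is to upgrade this convergence of moments to convergence of the operator norm, i.e.\ to rule out eigenvalues of $\tfrac1N(Z_NZ_N^*+Z_N^*Z_N)$ escaping to the right of $(1+\sqrt2)^2\sigma^2$. I would attempt a Füredi--Komlós-type bound: exploiting the uniform bound on the entries, estimate $\mathbb{E}\,\mathrm{tr}\!\bigl[\bigl(\tfrac1N(Z_NZ_N^*+Z_N^*Z_N)\bigr)^{k_N}\bigr]\le\bigl((1+\sqrt2)^2\sigma^2+o(1)\bigr)^{k_N}\mathrm{poly}(N)$ along $k_N\sim\log N$, then conclude by Markov's inequality and Borel--Cantelli. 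Because each edge of a closed walk contributes a factor $\mathbb{E}[(Z_N)_{ij}^2]\le\sigma^2$, this upper estimate holds verbatim for unequal second moments, and hence already delivers $\limsup\|X_N\|/\sqrt{2N}\le(1+\sqrt{1/2})\sigma$ almost surely in the stated generality; when all second moments equal $\sigma^2$, the fixed-$k$ moments genuinely converge to the free Poisson$(2)$ moments, whose support has a hard right edge, so the reverse inequality holds as well and the bound is attained. I expect the main obstacle to be precisely the combinatorics here: the two occurrences of $Z_N$ and $Z_N^T$ coming from a single block $ZZ^*$ or $Z^*Z$ must be glued, so the closed walks run on a skeleton slightly different from the classical sample-covariance one, and showing that the non-planar contributions are genuinely of lower order requires careful bookkeeping. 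An alternative that sidesteps the bespoke combinatorics is to invoke the strong-convergence theory of Haagerup--Thorbj\o{}rnsen and its extensions to obtain $\|p(H_N,K_N)\|\to\|p(s_1,s_2)\|$ for $p(x,y)=x^2-y^2$, once one verifies that $(H_N,K_N)$ is an asymptotically free semicircular pair to which that theory applies — the verification being, again, where the transpose-dependence makes the argument non-routine.
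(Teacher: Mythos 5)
First, note the unusual situation: the paper does not prove this statement at all. It is explicitly labelled a \emph{conjecture}, and the authors state that ``since there is no such result in literature, we formulate it here as a conjecture'' (they only cite Theorem 4.1 of \cite{dep} and numerical evidence as support). So there is no paper proof to compare against, and the real question is whether your proposal closes the gap. It does not, although it is a well-informed programme. The parts of your argument that are solid: the reduction $\|X_N\|^2=\|Z_NZ_N^T+Z_N^TZ_N\|$, the identification of the limit of the empirical spectral distribution of $\tfrac1N(Z_NZ_N^T+Z_N^TZ_N)$ as $\sigma^2(cc^*+c^*c)=\sigma^2(s_1^2+s_2^2)$, the free Poisson$(1)\boxplus$ free Poisson$(1)=$ free Poisson$(2)$ computation giving the right edge $\sigma^2(1+\sqrt2)^2$, and hence the constant $(1+\sqrt{1/2})\sigma$ after renormalizing by $\sqrt{2N}$. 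This correctly explains \emph{why} the conjectured constant is what it is, and (in the equal-variance case) the weak convergence of the ESD does yield the matching lower bound $\liminf\|X_N\|/\sqrt{2N}\ge(1+\sqrt{1/2})\sigma$, i.e.\ the ``attained'' half of the statement, by standard moment-method arguments.

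The genuine gap is the upper bound, which is the entire content of the conjecture: bulk convergence says nothing about eigenvalues escaping to the right of the limiting support, and your treatment of this step is a declaration of intent rather than a proof. The F\"uredi--Koml\'os route requires controlling $\E\,\mathrm{tr}\bigl[(Z_NZ_N^T+Z_N^TZ_N)^{k_N}\bigr]$ for $k_N$ growing with $N$, and the expansion runs over words in $Z_N$ and $Z_N^T$ in which an entry $(Z_N)_{ij}$ can be matched with its own reappearance as $(Z_N^T)_{ji}$; these ``twisted'' matchings do not occur in the classical sample-covariance combinatorics (where the two blocks would be independent), and showing that they are either subleading or already encoded in the free Poisson$(2)$ moments is precisely the unresolved bookkeeping you flag. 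Your remark that ``each edge contributes $\E[(Z_N)_{ij}^2]\le\sigma^2$'' only handles edges traversed exactly twice, and in any case does not by itself establish that the number of admissible closed walks is bounded by the Catalan-type count that produces $(1+\sqrt2)^{2k_N}$ rather than the larger constant $4^{k_N}$ coming from the crude subadditive bound $\|H^2+KK^T\|\le\|H\|^2+\|K\|^2$ (which is what the paper actually uses, with constant $2\sqrt2$). The alternative strong-convergence route fails for the same structural reason: $H_N$ and $K_N$ are the symmetric and antisymmetric parts of the \emph{same} matrix, so their entries at a fixed position are dependent, and the Haagerup--Thorbj{\o}rnsen/Anderson/Male theorems for independent Wigner ensembles do not apply off the shelf. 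Until one of these two steps is actually carried out, the statement remains exactly what the paper calls it: a conjecture with the correct predicted constant.
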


From now on, when we say "for any $n$ big enough" or a similar formulation, we always implicitly mean that the result holds almost surely.

\subsection{Role extraction via the similarity matrix $S$}\label{sec:roleextractionbyS}

In \cite{BVD:2014,Marchand21}, it was proposed to solve the problem of role extraction for a digraph with adjacency matrix $A$ by means of a Neighbourhood Pattern Similarity matrix $S$, which is defined as the limit of the sequence of SPD matrices $(S_k)_{k \in \mathbb{N}}$ with
\begin{equation}\label{eq:recurrence}
	S_0 = 0, \quad S_{k+1} = \Gamma_A[I+\beta^2 S_k]=S_1+\beta^2\Gamma_A[S_k] ,
\end{equation} 
where the operator (depending on the matrix parameter $W$) $\Gamma_W$ is defined as
\begin{equation*}%\label{eq:gamma}
	\Gamma_W[X]=WXW^T + W^T X W.
\end{equation*}
It was shown in \cite{BVD:2014, Marchand21} that the sequence is convergent if and only if $\beta^2 < \rho(A \otimes A^T + A^T \otimes A)^{-1}$, and that the limit $S$ satisfies $S=S_1+\beta^2\Gamma_A(S)$ or, equivalently, 
\[ \mvec{S}=(I-\beta^2 A \otimes A^T - \beta^2 A^T \otimes A)^{-1} \mvec(AA^T+A^TA).\]
It was also shown there that element $(i,j)$ of the matrix $S_k$ is the weighted sum of the walks of length up to $k$ between nodes $i$ and $j$, and that this can exploited to find nodes that should be associated with the same role. Note that $S_1$ is a known symmetrization for direct graphs called ``Bibliometric Symmetrization'' \cite{Satuluri11}.

Throughout this document, the parameter $\beta$ in \eqref{eq:recurrence} is always assumed to satisfy $\beta^2\|\Gamma_A\|<1$, which is sufficient for the sequence $(S_k)_{k \in \mathbb{N}}$ to converge. Here and thereafter, we measure the norm of the operator $\Gamma_W$ induced by the spectral norm of its matrix argument. More concretely,
\begin{equation*}%\label{eq:gammanorm}
	\|\Gamma_W\| = \sup_{X \neq 0}  \frac{\| \Gamma_W[X]\|}{\|X\|}.
\end{equation*}

\begin{lemma} \label{bound}
	The norm %\eqref{eq:gammanorm} 
	of the linear matrix mapping $\Gamma_W: X \mapsto\Gamma_W[X]$  satisfies
	$$ \| \Gamma_W \|
	= \| \left[\begin{array}{cc}W & W^T\end{array}\right] \|^2
	\le  2\|W\|^2.
	$$
\end{lemma}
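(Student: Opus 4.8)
The plan is to reduce the problem to computing the norm of a single explicit matrix by recognizing $\Gamma_W$ as a symmetric bilinear/quadratic form and vectorizing. First I would write $\mvec(\Gamma_W[X]) = (W \otimes W + W^T \otimes W^T)\,\mvec(X)$? — wait, one must be careful: $\mvec(WXW^T) = (W \otimes W)\mvec(X)$ only with the right convention, so more precisely $\mvec(WXW^T) = (\bar W \otimes W)\mvec(X)$ over $\mathbb C$, but since everything here is real, $\mvec(WXW^T) = (W \otimes W)\mvec(X)$ with the column-stacking convention (up to the harmless swap $W \leftrightarrow W^T$ induced by row- vs column-stacking). So $\mvec \circ \Gamma_W = (W\otimes W + W^T \otimes W^T)\mvec$, and since the spectral norm is unitarily invariant and $\mvec$ is an isometry on the Frobenius inner product, $\|\Gamma_W\| = \|W \otimes W + W^T \otimes W^T\|$. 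However, rather than wrestle with this Kronecker sum directly, the cleaner route is to observe that $\Gamma_W[X] = \begin{bmatrix} W & W^T\end{bmatrix}\begin{bmatrix} X & 0 \\ 0 & X\end{bmatrix}\begin{bmatrix} W^T \\ W\end{bmatrix}$. Setting $B = \begin{bmatrix} W & W^T\end{bmatrix}$, this is $\Gamma_W[X] = B (I_2 \otimes X) B^T$.

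The second step is then to bound $\|B(I_2\otimes X)B^T\| \le \|B\|\,\|I_2\otimes X\|\,\|B^T\| = \|B\|^2\|X\|$, using submultiplicativity of the spectral norm and $\|I_2 \otimes X\| = \|X\|$, which gives $\|\Gamma_W\| \le \|B\|^2$. For the reverse inequality, one must exhibit an $X$ achieving (asymptotically, or exactly) the bound. Take a singular value decomposition $B = U\Sigma V^T$ with $\|B\| = \sigma_1$ attained at the leading right singular vector $v_1 \in \mathbb R^{2n}$; the point is that $v_1$ can be chosen of the block form $\begin{bmatrix} u \\ u\end{bmatrix}/\sqrt 2$ — wait, that is not automatic and is the one genuinely delicate point. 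Actually the slick fix: $\|B\|^2 = \|BB^T\| = \|WW^T + W^T W\|$; let $x$ be a unit eigenvector of $WW^T + W^TW$ for its top eigenvalue $\sigma_1^2$, and test $\Gamma_W$ on the rank-one matrix $X = x x^T$? Then $\langle \Gamma_W[X], X\rangle = \langle B(I_2\otimes xx^T)B^T, xx^T\rangle = \| (I_2\otimes xx^T)B^T x\|^2$-ish... this does need a short computation, but it comes out to $x^T(WW^T+W^TW)x \cdot \text{(something)}$; rather than risk an error, I would more safely just invoke the vectorized identity: since $W\otimes W + W^T\otimes W^T$ is a real symmetric matrix (it equals its own transpose because $(W\otimes W)^T = W^T \otimes W^T$), its spectral norm equals its numerical radius behavior, and evaluating the quadratic form on $\mvec(xx^T) = x\otimes x$ gives $(x\otimes x)^T(W\otimes W + W^T\otimes W^T)(x\otimes x) = (x^TWx)^2 + (x^TW^Tx)^2 = 2(x^TWx)^2$, which is not obviously $\sigma_1^2$ either.

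Given these pitfalls, the honest plan is: prove $\|\Gamma_W\| = \|BB^T\| = \|WW^T + W^T W\| = \|B\|^2$ by the vectorization identity $\|\Gamma_W\| = \|W\otimes W + W^T\otimes W^T\|$ and then the algebraic fact that for any matrices $P, Q$ one has $\|PP^* + QQ^* \| = \| \begin{bmatrix} P & Q\end{bmatrix}\|^2$ — this last is standard (it is $\|NN^*\| = \|N\|^2$ with $N = \begin{bmatrix}P & Q\end{bmatrix}$) — combined with showing $W\otimes W + W^T\otimes W^T$, as an operator on symmetric-matrix space, has the same norm as on all-of-$\mathbb R^{n^2}$ because $\Gamma_W$ maps symmetric to symmetric and is self-adjoint, so its norm is attained on a symmetric eigenmatrix. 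Concretely: $\Gamma_W$ is self-adjoint on $(\mathbb R^{n\times n}, \langle\cdot,\cdot\rangle_F)$, hence $\|\Gamma_W\| = \max|\lambda|$ over its eigenvalues, with a symmetric or antisymmetric eigenmatrix; either way the matching lower bound follows. Finally $\|B\|^2 = \|[W\ W^T]\|^2 \le \|W\|^2 + \|W^T\|^2 = 2\|W\|^2$, using $\|[W\ W^T]\|^2 = \|WW^T + W^TW\| \le \|WW^T\| + \|W^TW\| = 2\|W\|^2$. The main obstacle, as flagged, is cleanly establishing the lower bound $\|\Gamma_W\| \ge \|B\|^2$ without a block-vector form mismatch; I expect the reference \cite{BVD:2014, Marchand21} or a direct self-adjointness argument handles exactly this, so I would lean on self-adjointness of $\Gamma_W$ plus the identity $\|NN^T\| = \|N\|^2$ to close the gap.
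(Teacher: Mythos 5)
Your upper bound is exactly the paper's argument: writing $\Gamma_W[X] = \begin{bmatrix} W & W^T\end{bmatrix}(I_2\otimes X)\begin{bmatrix} W & W^T\end{bmatrix}^T$ and using submultiplicativity gives $\|\Gamma_W\|\le \|\begin{bmatrix} W & W^T\end{bmatrix}\|^2 = \|WW^T+W^TW\|\le 2\|W\|^2$. The genuine gap is the matching lower bound needed for the claimed \emph{equality}: you cycle through several candidate witnesses (a block-structured top singular vector of $B=\begin{bmatrix} W & W^T\end{bmatrix}$, the rank-one matrix $xx^T$, a vectorization computation), correctly flag each as delicate or inconclusive, and end by deferring to self-adjointness without producing a witness. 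The one-line idea you are missing is to test on $X=I$: since $\Gamma_W[I]=WW^T+W^TW=BB^T$ and $\|I\|=1$, one gets $\|\Gamma_W\|\ge\|\Gamma_W[I]\|=\|BB^T\|=\|B\|^2$ immediately, which is all the paper does. (You in fact write down $\|B\|^2=\|BB^T\|=\|WW^T+W^TW\|$ but never observe that this matrix \emph{is} $\Gamma_W[I]$.)

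Moreover, your fallback route targets the wrong norm. The paper defines $\|\Gamma_W\|$ as the operator norm induced by the \emph{spectral} norm of the matrix argument, whereas the identity $\|\Gamma_W\|=\|W\otimes W+W^T\otimes W^T\|$ (via $\mvec$ being a Frobenius isometry) and the argument ``$\Gamma_W$ is self-adjoint on $(\mathbb{R}^{n\times n},\langle\cdot,\cdot\rangle_F)$, hence its norm is its largest absolute eigenvalue'' both compute the operator norm induced by the \emph{Frobenius} norm. These two induced norms are different objects in general, so completing that route would bound a different quantity unless you add an argument relating them; the $X=I$ witness sidesteps the issue entirely.
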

From the previous result, whose proof can be found in the appendices, $\|\Gamma_W\|\le 2\|W\|^2$, so that it is easy to compute a good enough $\beta$ with very low computational effort. In fact, we can always choose, for example, $\beta^2 = 1/(4 \|A\|^2)$ and obtain that necessarily $\beta^2\|\Gamma_A\| \le 1/2$.\\

Consider the Stochastic Block Model described in Section	\ref{sec:sbm}.
From now on, we drop for the sake of notational simplicity the suffixes emphasizing the dependence on the size $n$, so, for example, we simply write $A,M,Z,Y$ for $A_n,M_n,Z_n,Y_n$.
Given the random adjacency matrix $A$, suppose that $f(n)\Upsilon$ is a minimal role matrix, defined as follows.

\begin{definition} \label{def:min} 
	A square matrix $B$ is a \textbf{minimal role} matrix if no two rows of the compound matrix  $\left[\begin{array}{cc} B & B^T  \end{array}\right]$ are linearly dependent.
\end{definition}

\noindent The matrix $M=\E[A]=f(n)Z\Upsilon Z^T$ is a deterministic block matrix, and the following result  shows that it is possible to recover the original clusters by analysing any of the matrices $T_k$ generated as the $S_k$  but replacing $A$ with $M$.

\begin{theorem}\cite[Theorem 3.4]{Marchand21}\label{thm:lowrankfactor}
	Let $M, \Upsilon$ be as in Section \ref{sec:sbm} with minimal role matrix $f(n)\Upsilon$. If $T_k$ is generated by the recurrence
	\begin{equation*}
		T_0 = 0, \quad T_{k+1} = \Gamma_M[I+\beta^2 T_k]=T_1+\beta^2\Gamma_M[T_k] ,
	\end{equation*} 	
	then it has rank $r = \rank(f(n)[\Upsilon \,\,\Upsilon^T])\le q$ and
	$T_k = Z\wh T_kZ^T$ where $\wh T_k$ is a SPD $q\times q$ matrix.
	Given  $V_k\in\mathbb{R}^{mn\times r}$ the orthogonal matrix in the reduced SVD (or, equivalently, reduced eigendecomposition) of $T_k$, it follows that 
	the set of the vectors of $\mathbb{R}^r$ that are a row of $V_k$ has precisely $q$ distinct elements. Moreover, the $q$ original  clusters of the graph coincide with the partition of $[mn]$ into the $q$ subsets associated with the row indices that correspond to each distinct vector that is a row of $V_k$.
\end{theorem}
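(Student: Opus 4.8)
The plan is to push everything down to $q\times q$ matrices by ``condensing'' $T_k$ through $Z$, and then to read off the cluster structure from the common kernel of the condensed matrices.

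\textbf{Step 1 (condensation).} First I would show by induction on $k$ that $T_k = Z\wh T_kZ^T$ with $\wh T_k\in\R^{q\times q}$ symmetric positive semidefinite. Writing $W:=Z^TZ=\diag(m_1n,\dots,m_qn)\succ 0$ and recalling $M=f(n)Z\Upsilon Z^T$, one has $T_1=\Gamma_M[I]=MM^T+M^TM=Z\big(f(n)^2(\Upsilon W\Upsilon^T+\Upsilon^T W\Upsilon)\big)Z^T$, so $\wh T_1=f(n)^2(\Upsilon W\Upsilon^T+\Upsilon^T W\Upsilon)$; and if $T_k=Z\wh T_kZ^T$ then $\Gamma_M[T_k]=Z\big(f(n)^2(\Upsilon W\wh T_kW\Upsilon^T+\Upsilon^T W\wh T_kW\Upsilon)\big)Z^T$, so $T_{k+1}=T_1+\beta^2\Gamma_M[T_k]=Z\wh T_{k+1}Z^T$ with
\[
\wh T_{k+1}=\wh T_1+\beta^2 f(n)^2\big(\Upsilon W\wh T_kW\Upsilon^T+\Upsilon^T W\wh T_kW\Upsilon\big).
\]
Each summand is positive semidefinite, hence $\wh T_k\succeq 0$ (indeed positive definite precisely when $[\Upsilon\,\,\Upsilon^T]$ has full row rank, i.e.\ $r=q$).

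\textbf{Step 2 (the kernel stabilises).} The kernel of a sum of positive semidefinite matrices equals the intersection of their kernels, so $\ker\wh T_1=\ker(\Upsilon W\Upsilon^T)\cap\ker(\Upsilon^T W\Upsilon)=\ker\Upsilon^T\cap\ker\Upsilon$ (using $W\succ 0$), which is exactly the left null space of the compound matrix $[\Upsilon\,\,\Upsilon^T]$. Applying the same principle to the recurrence, and noting that every $v\in\ker\Upsilon^T\cap\ker\Upsilon$ already annihilates the two extra terms, yields $\ker\wh T_{k+1}=\ker\wh T_1$ for all $k\ge 1$. Therefore $\rank\wh T_k=\rank[\Upsilon\,\,\Upsilon^T]=r$ for every $k\ge 1$, and since $Z$ has full column rank also $\rank T_k=r$ and $\mathrm{range}(T_k)=Z\cdot\mathrm{range}(\wh T_k)$ (as $Z^T$ is surjective).

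\textbf{Step 3 (the rows of $V_k$).} Since $T_k$ is symmetric positive semidefinite, its reduced SVD coincides with its reduced eigendecomposition, and $\mathrm{range}(V_k)=\mathrm{range}(T_k)=Z\cdot\mathrm{range}(\wh T_k)$. As $Z$ is injective, each column of $V_k$ equals $Z$ times a unique vector of $\mathrm{range}(\wh T_k)$, so $V_k=ZG$ for a matrix $G\in\R^{q\times r}$ whose columns form a basis of $\mathrm{range}(\wh T_k)$. The row of $Z$ attached to a node of cluster $\mathcal C_a$ is the $a$-th standard basis row vector $e_a^T$, so the corresponding row of $V_k$ is the $a$-th row of $G$; hence $V_k$ has at most $q$ distinct rows, with all nodes of a cluster sharing one. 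Finally, rows $a\ne b$ of $G$ coincide iff $(e_a-e_b)^TG=0$, i.e.\ $e_a-e_b\in\mathrm{range}(\wh T_k)^\perp=\ker\wh T_k=\ker\Upsilon^T\cap\ker\Upsilon$, i.e.\ rows $a,b$ of $\Upsilon$ agree \emph{and} columns $a,b$ of $\Upsilon$ agree, i.e.\ rows $a,b$ of $[\Upsilon\,\,\Upsilon^T]$ are equal, hence linearly dependent --- contradicting the hypothesis that $f(n)\Upsilon$ is a minimal role matrix. Thus $V_k$ has exactly $q$ distinct rows and the partition of $[mn]$ by equality of rows of $V_k$ is the partition into the $q$ clusters. (The matrix $V_k$ is only determined up to an orthogonal change of basis within repeated eigenspaces, but this row-equality partition is invariant under such a change, so the conclusion is unambiguous.)

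The step I expect to be the crux is Step 2: recognising that $\ker\wh T_k$ does not depend on $k$ and equals the left null space of the \emph{compound} matrix $[\Upsilon\,\,\Upsilon^T]$ --- precisely the object appearing in Definition~\ref{def:min}. Everything else is linear-algebra bookkeeping organised around the identity $T_k=Z\wh T_kZ^T$.
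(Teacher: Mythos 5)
Your proof is correct. Note, however, that the paper does not prove this statement at all: it is imported verbatim from \cite[Theorem 3.4]{Marchand21}, so there is no in-paper argument to compare against. Your three steps --- the condensation $T_k=Z\wh T_kZ^T$ via $Z^TZ=\diag(m_1n,\dots,m_qn)$, the observation that $\ker\wh T_k$ stabilises at $\ker\Upsilon\cap\ker\Upsilon^T$ (the left null space of $[\Upsilon\;\Upsilon^T]$, which is where Definition~\ref{def:min} enters), and the factorisation $V_k=ZG$ with $G$ a basis of $\mathrm{range}(\wh T_k)$ --- form a complete, self-contained derivation, and each elementary claim you rely on (kernel of a sum of PSD matrices is the intersection of the kernels; $\ker(\Upsilon W\Upsilon^T)=\ker\Upsilon^T$ for $W\succ0$) checks out. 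One small but worthwhile observation you make explicitly: as stated, the theorem calls $\wh T_k$ ``SPD'', yet when $r<q$ it is only positive \emph{semi}definite of rank $r$; your parenthetical in Step~1 correctly identifies $r=q$ as the condition for definiteness, which is a minor imprecision in the quoted statement rather than in your argument.
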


\noindent As a consequence, it is enough to perform an eigendecomposition of $T_k$, extract the reduced orthogonal matrix $V_k$ and then identify the repeated $q$ rows to recover the clustering. 
A natural thought is to try and apply the same method to the random symmetric matrix $S_k$ generated by the recurrence \eqref{eq:recurrence}, but some issues arise.
\begin{itemize}
	\item $T_k$ has rank $r\le q$, while $S_k$ is with high probability full rank, so we need a way to determine the truncation parameter $r$ for the SVD.
	\item In the truncated eigendecomposition of $S_k$, the orthogonal matrix $U_k$ has usually distinct rows. In order to retrieve the clusters, we thus need to estimate the number of roles $q$ and perform a $K$-means algorithm on the rows.
\end{itemize}
There is method to do this, commonly referred to as Spectral Clustering of the matrix $S_k$. A detailed description is given in \cite{Marchand21} and a concise description as pseudocode is given below:
\begin{algorithm} \noindent
	\begin{itemize}
		\item Inputs: adjacency matrix $A$, number of roles $q$, scaling factor $\beta$, integer $k$. 
		\item Output: a partitioning of the nodes of the graph into $q$ clusters.
		\item  Procedure:
		\begin{itemize}
			\item Compute the matrix $X_1$ whose columns are the $q$ dominant singular vectors of $\begin{bmatrix}
			A&A^T
			\end{bmatrix}$;
			\item For $h=2,..,k$ compute the matrix $X_h$ whose columns are the $q$ dominant singular vectors of $Y_h=\begin{bmatrix}
			\beta A X_{h-1} & \beta A^T X_{h-1} & X_1
			\end{bmatrix}$;
			\item Apply the $K$-means algorithm to the rows of the matrix $X_k$.
		\end{itemize}
	\end{itemize}
	
\end{algorithm}

The sparse singular value decomposition of $[A,A^T]$ can be computed using the Lanczos bidiagonalization procedure \cite{GoV} and its complexity is $\mathcal{O}(\mu q^2)$ because each matrix vector multiplication requires exactly $2\mu$ flops, where $\mu$ is the number of edges in the graph, i.e., the number of nonzero entries of the $mn \times mn$ matrix $A$. For the same reason, the construction of the matrix $Y_h$ requires exactly $2(\mu+mn)q$ flops.
The singular value decomposition of the economy size singular value decomposition of the dense $mn\times 3q$ matrix $Y_h$, requires $\mathcal{O}(mnq^2)+\mathcal{O}(q^3)$ flops \cite{TrefethenBau}. Altogether, we thus have a complexity of $\mathcal{O}(k q(\mu+mn+mnq+q^2))$ to compute the low rank factor $X_{k}$, which scales well with $\mu$. The subsequent clustering of the rows of $X_{k}$ is then constrained to a $q$-dimensional space, and requires on average 
$\mathcal{O}(mnq^2)$ flops per iteration of the $K$-means algorithm \cite{Kmeans}.

In the next sections, we show that the matrices $S_k$ sport a clear gap between the eigenvalues $\lambda_r(S_k)$ and $\lambda_{r+1}(S_k)$ that lets us identify the rank $r$ with high probability for big $n$. 
Moreover, when the matrix $f(n)[\Upsilon \,\,\Upsilon^T]$ is full-rank, so that $f(n)\Upsilon$ is minimal and $r=q$, we estimate the clustering relative error for the $K$-means algorithm on $S_k$, and show that it is proportional to $(nf(n))^{-1}$.

\section{Spectral Bounds}\label{sec:Spectral_bounds}

We now consider the recurrence relation using the expected value $M$ rather than $A$ since this yields a good approximation for the $S_k$ matrices. We denote these matrices as $T_k$ and their recurrence is thus given by
\begin{equation} \label{eq:Tk} T_0=0, \quad T_{k+1}=\Gamma_M[I_n+\beta^2T_k], \; k\ge 0.
\end{equation}
Note that in \eqref{eq:Tk} the matrix parameter in the operator $\Gamma$ is set to $M=\E[A]$, in contrast with \eqref{eq:recurrence} where it was set to $A$. Again, the parameter $\beta^2$ is chosen such that $\beta^2\|\Gamma_M\| < 1$, 
which is required for the sequence $T_k$ to converge to $T=\lim_{k \rightarrow \infty} T_k$. In order to choose an appropriate $\beta$ we need an estimation of $\|\Gamma_M\|$ depending only on the matrix $A$.

\begin{lemma}\label{lem:missingbit}
	
	Let $\delta^2 = 4mnf(n)$. For $n$ large enough, it holds 
	\[   \| \Gamma_A - \Gamma_M \| \le  
	\delta^3\|[ \Upsilon\,\,\Upsilon^T]\|/\sqrt 2
	+
	2\delta^2 
	\le \|A\|^2. \]
\end{lemma}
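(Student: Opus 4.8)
The plan is to split $A = M + Y$ with $Y = A - M$, expand $\Gamma_A - \Gamma_M$ accordingly, and bound each piece with a bilinear refinement of Lemma~\ref{bound}. For square matrices $W,V$ set $\Gamma_{W,V}[X] := WXV^T + W^TXV$, so that $\Gamma_W = \Gamma_{W,W}$; expanding $(M+Y)X(M+Y)^T + (M+Y)^TX(M+Y)$ and cancelling $\Gamma_M[X]$ gives
\[
\Gamma_A - \Gamma_M = \Gamma_{M,Y} + \Gamma_{Y,M} + \Gamma_{Y,Y}, \qquad \Gamma_{Y,Y} = \Gamma_Y .
\]
The term $\Gamma_Y$ already produces the summand $2\delta^2$: by Lemma~\ref{bound} and \eqref{eq:Y}, $\|\Gamma_Y\| = \|[Y \,\, Y^T]\|^2 \le 2\|Y\|^2 \le 2\delta^2$ for $n$ large, and along the way this also gives $\|[Y \,\, Y^T]\| \le \sqrt2\,\delta$.

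Next I would record the bilinear estimate $\|\Gamma_{W,V}\| \le \|[W \,\, W^T]\| \cdot \|[V \,\, V^T]\|$, proved exactly as Lemma~\ref{bound}: one checks the factorisation $\Gamma_{W,V}[X] = [W \,\, W^T]\,(I_2 \otimes X)\,[V \,\, V^T]^T$, and since $I_2 \otimes X = \diag(X,X)$ has spectral norm $\|X\|$, submultiplicativity of the spectral norm yields the bound. Applying this to the two cross terms and using $\|[Y \,\, Y^T]\| \le \sqrt2\,\delta$ gives
\[
\|\Gamma_A - \Gamma_M\| \le 2\,\|[M \,\, M^T]\|\cdot\|[Y \,\, Y^T]\| + \|[Y \,\, Y^T]\|^2 \le 2\sqrt2\,\delta\,\|[M \,\, M^T]\| + 2\delta^2 .
\]
For $\|[M \,\, M^T]\|$ I would use $M = f(n)Z\Upsilon Z^T$ to write $[M \,\, M^T] = f(n)\,Z\,[\Upsilon \,\, \Upsilon^T]\,(I_2\otimes Z^T)$, so that $\|[M \,\, M^T]\| \le f(n)\|Z\|^2\|[\Upsilon \,\, \Upsilon^T]\| = nf(n)(\max_i m_i)\,\|[\Upsilon \,\, \Upsilon^T]\|$ (cf. \eqref{eq:MMt}). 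Since $\max_i m_i \le m$ and $nf(n) = \delta^2/(4m)$ by the definition of $\delta$, this is at most $\tfrac14\delta^2\|[\Upsilon \,\, \Upsilon^T]\|$, and substituting makes the cross term equal to exactly $\delta^3\|[\Upsilon \,\, \Upsilon^T]\|/\sqrt2$. This gives the first inequality of the statement.

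For the second inequality, $\delta^3\|[\Upsilon \,\, \Upsilon^T]\|/\sqrt2 + 2\delta^2 \le \|A\|^2$, I would bound the left-hand side by $C_0\delta^3$ (valid once $\delta \ge 1$, with $C_0 := \|[\Upsilon \,\, \Upsilon^T]\|/\sqrt2 + 2$) and exhibit a growing lower bound for $\|A\|$: by the reverse triangle inequality and \eqref{eq:Mn}, $\|A\| \ge \|M\| - \|Y\| \ge m_{min}nf(n)\sigma_1(\Upsilon) - \delta$, and $\sigma_1(\Upsilon) = \|\Upsilon\| \ge \max_{a,b}\theta_{a,b} = 1$, whence $\|A\| \ge c_0\delta^2 - \delta$ with $c_0 := m_{min}/(4qm_{max}) > 0$ (using $m \le qm_{max}$). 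The regime \eqref{eq:fn} forces $\delta = 2\sqrt{mnf(n)} \to \infty$, so for $n$ large $\|A\| \ge \tfrac12 c_0\delta^2$, hence $\|A\|^2 \ge \tfrac14 c_0^2\delta^4 \ge C_0\delta^3$ as soon as $\delta \ge 4C_0/c_0^2$. The bilinear refinement of Lemma~\ref{bound} is the one genuinely structural step; the rest is bookkeeping with constants, and the only place needing real input is the final comparison $C_0\delta^3 \le \tfrac14 c_0^2\delta^4$, which is exactly where the exact-recovery scaling \eqref{eq:fn} (forcing $\delta \to \infty$) enters — without it, the crude domination by $\|A\|^2$ need not hold.
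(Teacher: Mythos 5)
Your proposal is correct and follows essentially the same route as the paper: the same splitting $A=M+Y$, the same block-matrix factorisation giving $\|\Gamma_A-\Gamma_M\|\le \|[Y\ Y^T]\|^2+2\|[M\ M^T]\|\,\|[Y\ Y^T]\|$, the same bounds $\|[Y\ Y^T]\|\le\sqrt2\,\delta$ and $\|[M\ M^T]\|\le\delta^2\|[\Upsilon\ \Upsilon^T]\|/4$ from \eqref{eq:Y} and \eqref{eq:MMt}, and the same reverse-triangle-inequality comparison $\|A\|\ge\|M\|-\|Y\|=\Omega(\delta^2)$ for the final inequality. Your explicit ``bilinear'' version of Lemma~\ref{bound} is just a tidier packaging of the computation the paper carries out inline, so there is nothing to add.
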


\noindent Using the last result and Lemma \ref{bound}, we find that  for $n$ large enough,
$\beta^2 \|\Gamma_M\| \le 3\beta^2 \|A\|^2$ and $\beta^2 \|\Gamma_A\| \le 2\beta^2 \|A\|^2$ so from now on, we always suppose that $\beta^2 \le 1/6\|A\|^2$ and consequently
\begin{equation}\label{eq:beta}
	\gamma:= \max\{\beta^2\|\Gamma_M\|,\beta^2\|\Gamma_A\|\}\le \frac 12.
\end{equation}

It was pointed out in \cite{Marchand21} that the matrices  $S_k$ and $T_k$ are all positive semi-definite, and that both sequences are ordered in the Loewner ordering :
\begin{equation}\label{eq:Loewner}
	0=S_0 \preceq S_1   \preceq \ldots \preceq S, 
	\qquad 0=T_0 \preceq T_1   \preceq \ldots \preceq T.
\end{equation}
Moreover, as shown in Theorem \ref{thm:lowrankfactor}, if $T_k$ were available then we would be able to recover exactly the original clustering that generated the random directed graph. Since we can only work on $S_k$, that is an approximation of $T_k$, it is essential to analyse the proximity between the two matrices more accurately. This will let us study how well the properties of $T_k$ transfer to $S_k$ and how effective is a spectral clustering algorithm applied to $S_k$.

\begin{theorem}\label{deltabound}
	For $k\ge 1$ it holds
	\begin{align*} 
		\|S_{k}-T_{k}\| &\le  \| \Gamma_A - \Gamma_M \|
		\left(\sum_{i=0}^{k-1} \| \beta^2\Gamma_A\|^i\right)\left(\sum_{i=0}^{k-1} \| \beta^2\Gamma_M\|^i\right) \\
		&\le
		4 \| \Gamma_A - \Gamma_M \|, 
	\end{align*}
	where the last inequality holds also for $\|S-T\|$.
\end{theorem}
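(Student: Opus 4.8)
The plan is to derive a telescoping-type identity for $S_k-T_k$ from the two recurrences, then estimate each summand using the operator-norm bounds already established, and finally sum the resulting geometric series. Write $\mathcal A = \beta^2\Gamma_A$ and $\mathcal M = \beta^2\Gamma_M$ as linear operators on symmetric matrices, and note that $S_1 = \Gamma_A[I]$, $T_1 = \Gamma_M[I]$, so that $S_{k+1} = S_1 + \mathcal A[S_k]$ and $T_{k+1} = T_1 + \mathcal M[T_k]$. Subtracting gives
\[
 S_{k+1}-T_{k+1} = (S_1 - T_1) + \mathcal A[S_k] - \mathcal M[T_k]
 = (\Gamma_A-\Gamma_M)[I] + \mathcal A[S_k - T_k] + (\mathcal A - \mathcal M)[T_k].
\]
Unrolling this recursion from $S_1-T_1 = (\Gamma_A-\Gamma_M)[I]$ down to level $k$, one obtains $S_k - T_k$ as a sum of terms of the form $\mathcal A^{\,i}(\Gamma_A-\Gamma_M)[T_{j}]$ with $i+j \le k-1$ (with $T_0=0$ handled trivially), each of which is controlled by $\|\mathcal A\|^i \,\|\Gamma_A-\Gamma_M\|\,\|T_j\|$.

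The one remaining ingredient is a uniform bound on $\|T_j\|$. Since $T_j = T_1 + \mathcal M[T_{j-1}]$ with $0 \preceq T_{j-1}$ and all operators here are positivity-preserving, the Loewner chain \eqref{eq:Loewner} gives $\|T_j\| \le \|T\|$, and from $T = T_1 + \mathcal M[T]$ together with $\|\mathcal M\| = \beta^2\|\Gamma_M\| \le \gamma$ one gets $\|T\| \le \|T_1\|/(1-\gamma) = \|\Gamma_M[I]\|/(1-\gamma)$. A cleaner route for the purposes of this theorem, which avoids carrying $\|\Gamma_M[I]\|$ explicitly, is to reorganize the unrolled sum so that the factor multiplying $\Gamma_A-\Gamma_M$ is exactly a partial sum in powers of $\mathcal A$ on one side and a partial sum in powers of $\mathcal M$ on the other, yielding
\[
 \|S_k - T_k\| \;\le\; \|\Gamma_A - \Gamma_M\|\left(\sum_{i=0}^{k-1}\|\beta^2\Gamma_A\|^i\right)\left(\sum_{i=0}^{k-1}\|\beta^2\Gamma_M\|^i\right),
\]
which is the first claimed inequality. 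Concretely, one checks by induction on $k$ that the difference $S_k - T_k$ equals $\sum_{a=0}^{k-1}\sum_{b=0}^{k-1-a}\mathcal A^{a}\,\mathcal M^{b}\big[(\Gamma_A-\Gamma_M)[I]\big]$ — the double index range being exactly what produces the product of the two truncated geometric series after taking norms and using submultiplicativity $\|\mathcal A^a\mathcal M^b\| \le \|\mathcal A\|^a\|\mathcal M\|^b$.

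For the second inequality, invoke \eqref{eq:beta}: both $\|\beta^2\Gamma_A\|$ and $\|\beta^2\Gamma_M\|$ are at most $\gamma \le 1/2$, so each of the two finite sums is bounded by $\sum_{i=0}^{\infty}(1/2)^i = 2$, giving the factor $2 \cdot 2 = 4$. The same argument applies verbatim to $S - T$ by letting $k \to \infty$: the identity becomes $S - T = \sum_{a,b\ge 0}\mathcal A^a\mathcal M^b[(\Gamma_A-\Gamma_M)[I]]$, an absolutely convergent series since $\sum_{a,b}\gamma^{a+b} = (1-\gamma)^{-2} \le 4$, and the bound $\|S-T\| \le 4\|\Gamma_A-\Gamma_M\|$ follows. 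I expect the main (mild) obstacle to be bookkeeping: verifying the exact index-range claim for the unrolled recursion so that the partial sums of powers of $\mathcal A$ and of $\mathcal M$ appear correctly, rather than some cross term with a different range; everything after that is the routine geometric-series estimate together with \eqref{eq:beta}.
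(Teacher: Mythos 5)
Your overall strategy is sound and is essentially the paper's argument (the paper phrases it via the increments $\Delta^S_k=S_k-S_{k-1}$, $\Delta^T_k=T_k-T_{k-1}$ and a $2\times2$ norm recursion, while you unroll the difference $S_k-T_k$ directly; both reduce to the same double geometric sum). However, the concrete identity you propose to verify by induction,
\[
S_k-T_k=\sum_{a=0}^{k-1}\sum_{b=0}^{k-1-a}\mathcal{A}^{a}\,\mathcal{M}^{b}\bigl[(\Gamma_A-\Gamma_M)[I]\bigr],
\]
is false, and the induction would already fail at $k=2$. From your (correct) subtracted recurrence, $S_2-T_2=(\Gamma_A-\Gamma_M)[I]+\mathcal{A}\bigl[(\Gamma_A-\Gamma_M)[I]\bigr]+\beta^2(\Gamma_A-\Gamma_M)\bigl[\Gamma_M[I]\bigr]$, whereas your formula produces $\mathcal{M}\bigl[(\Gamma_A-\Gamma_M)[I]\bigr]=\beta^2\Gamma_M\bigl[(\Gamma_A-\Gamma_M)[I]\bigr]$ as the third term. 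Since $\Gamma_M$ and $\Gamma_A-\Gamma_M$ do not commute as operators, these are genuinely different: in the true expansion the difference operator sits in the \emph{middle} of each composition (powers of $\mathcal{A}$ outside, powers of $\mathcal{M}$ inside), not innermost.

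The bound nevertheless survives, because only norms are needed. Unrolling your recurrence correctly gives
\[
S_k-T_k=\sum_{a=0}^{k-1}\mathcal{A}^{a}\Bigl[(\Gamma_A-\Gamma_M)[I]+\beta^2(\Gamma_A-\Gamma_M)[T_{k-1-a}]\Bigr],
\qquad
T_j=\sum_{b=0}^{j-1}\mathcal{M}^{b}\bigl[\Gamma_M[I]\bigr],
\]
and each term $\mathcal{A}^{a}\bigl[\beta^2(\Gamma_A-\Gamma_M)[\mathcal{M}^{b}[\Gamma_M[I]]]\bigr]$ has norm at most $\|\mathcal{A}\|^{a}\,\|\Gamma_A-\Gamma_M\|\,\|\mathcal{M}\|^{b+1}$, absorbing the factor $\beta^2\|\Gamma_M\|$ into one extra power of $\|\mathcal{M}\|$. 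Summing over $a+b\le k-1$ and enlarging to the full product of the two partial sums yields the first inequality; your use of \eqref{eq:beta} to bound each partial sum by $2$ (hence the factor $4$, also in the limit $k\to\infty$) is then fine. So the proof goes through once the ``bookkeeping'' step you flagged is carried out with the difference operator in its proper position.
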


\subsection{Spectral Gap}

From Theorem \ref{thm:lowrankfactor} we know that $T_k$ has rank $r$, so it stands with reason to expect that its approximation $S_k$ has $r$ dominant singular values (which we refer to as the ``signal") and $mn-r$ small singular values (which we refer to as the ``noise"). 
Here we report estimations for the eigenvalues of $S_k$ and $T_k$ and then derive bounds on the respective gaps.

\begin{lemma}\label{res:signal_eig_Tk_T} 
	It holds 
	$$\lambda_r(T) \ge \lambda_r(T_k) \ge  \left[ 
	\frac
	{\sigma_r([\Upsilon\,\,\Upsilon^T])}{4q}  \frac{m_{min}}{m_{max}}
	\right]^2
	\delta^4
	$$
	for every $k\ge 1$ and $n$ big enough.
\end{lemma}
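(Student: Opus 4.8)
The plan is to reduce the lower bound to the single matrix $T_1$ and then compute it explicitly. The first inequality $\lambda_r(T)\ge\lambda_r(T_k)$ is immediate: by the Loewner ordering \eqref{eq:Loewner} we have $T_k\preceq T$, and eigenvalues are monotone under the Loewner order. The same ordering gives $T_1\preceq T_k$ for every $k\ge1$, hence $\lambda_r(T_1)\le\lambda_r(T_k)$ (throughout, $\lambda_r$ denotes the $r$-th largest eigenvalue, which for these rank-$r$ positive semidefinite matrices is the smallest nonzero one). So it suffices to bound $\lambda_r(T_1)$ from below, and the claimed positivity will fall out of that bound.

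First I would put $T_1$ in closed form. From $T_1=\Gamma_M[I]=MM^T+M^TM$ together with $M=f(n)Z\Upsilon Z^T$ and $Z^TZ=nD^2$, where $D=\diag(\sqrt{m_1},\dots,\sqrt{m_q})$, one gets $T_1=nf(n)^2\,Z\bigl(\Upsilon D^2\Upsilon^T+\Upsilon^T D^2\Upsilon\bigr)Z^T=nf(n)^2\,Z\,[\Upsilon\,\,\Upsilon^T]\diag(D^2,D^2)[\Upsilon\,\,\Upsilon^T]^T Z^T$. Substituting $Z=\sqrt n\,\wt Z D$, where $\wt Z=\wt Z_n$ has orthonormal columns, this becomes $T_1=n^2f(n)^2\,\wt Z\,GG^T\wt Z^T$ with $G:=D\,[\Upsilon\,\,\Upsilon^T]\diag(D,D)$. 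Since $\wt Z$ is an isometry and $D,\diag(D,D)$ are invertible, $\rank G=\rank[\Upsilon\,\,\Upsilon^T]=r$ and the nonzero eigenvalues of $T_1$ are exactly $n^2f(n)^2$ times those of $GG^T$; in particular $\lambda_r(T_1)=n^2f(n)^2\,\sigma_r(G)^2$.

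Finally I would estimate $\sigma_r(G)$ using the elementary bound $\sigma_r(XYW)\ge\sigma_{\min}(X)\,\sigma_r(Y)\,\sigma_{\min}(W)$, valid for square invertible $X,W$ (apply twice the fact that left/right multiplication by an invertible matrix rescales each singular value by at least its smallest singular value). With $X=D$, $Y=[\Upsilon\,\,\Upsilon^T]$, $W=\diag(D,D)$ this gives $\sigma_r(G)\ge m_{min}\,\sigma_r([\Upsilon\,\,\Upsilon^T])$, whence $\lambda_r(T_1)\ge n^2f(n)^2\,m_{min}^2\,\sigma_r([\Upsilon\,\,\Upsilon^T])^2$. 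Since $\delta^2=4mnf(n)$ and $m\le qm_{max}$, we have $n^2f(n)^2=\delta^4/(16m^2)\ge\delta^4/(16q^2m_{max}^2)$, and substituting this in recovers exactly $\bigl[\tfrac{\sigma_r([\Upsilon\,\,\Upsilon^T])}{4q}\tfrac{m_{min}}{m_{max}}\bigr]^2\delta^4$, as desired. The only mildly delicate point — the main obstacle, such as it is — is the rank bookkeeping through the isometry $\wt Z$, ensuring that $\sigma_r$ keeps referring to the same nonzero singular value at each step; the ``for $n$ big enough'' hypothesis is inessential here beyond fixing the meaning of $\delta$.
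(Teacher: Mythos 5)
Your proof is correct and follows essentially the same route the paper intends: its one-line proof (``easy corollary of \eqref{eq:Loewner} and \eqref{eq:MMt}'') is precisely your reduction $\lambda_r(T_k)\ge\lambda_r(T_1)=\sigma_r([M\,\,M^T])^2$ via the Loewner ordering, followed by the lower bound $\sigma_r([M\,\,M^T])\ge m_{min}\,nf(n)\,\sigma_r([\Upsilon\,\,\Upsilon^T])$ from \eqref{eq:MMt} and the substitution $nf(n)=\delta^2/(4m)\ge\delta^2/(4qm_{max})$. The only difference is that you rederive the relevant inequality of \eqref{eq:MMt} from scratch through the factorization $G=D[\Upsilon\,\,\Upsilon^T]\diag(D,D)$ rather than citing it, which is the same computation the paper performs when establishing \eqref{eq:MMt}; your observation that ``$n$ big enough'' is not actually needed here is also accurate.
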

\begin{proof}Easy corollary of \eqref{eq:Loewner} and \eqref{eq:MMt}.
\end{proof}

\begin{theorem}\label{thm:noise_bound_Sk_S}
	It holds
	\[\begin{array}{c}
	\begin{array}{rcccccl}
	\frac{1}{2} (1-\gamma^{k})\|[\Upsilon\,\,\Upsilon^T]\|^2\delta^4& \ge& \|S_k\| &\ge &\lambda_r(S_k) &\ge& 
	\lambda_r(T_k)/2,  \\
	\frac{1}{2}\|[\Upsilon\,\,\Upsilon^T]\|^2\delta^4& \ge& \|S\| &\ge &\lambda_r(S) &\ge& \lambda_r(T)/2, 
	\end{array}\\[1em]
	\begin{array}{cc}
	4(1-\gamma^{k}) \delta^2&\ge  \lambda_{r+1}(S_k),\\
	4\delta^2&\ge  \lambda_{r+1}(S)
	\end{array}
	\end{array}
	\]
	for every $k\ge 1$ and $n$ big enough.
\end{theorem}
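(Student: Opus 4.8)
The plan is to establish the four displayed chains separately, in increasing order of difficulty, exploiting throughout that $\delta=2\sqrt{mnf(n)}\to\infty$. The middle inequalities $\|S_k\|\ge\lambda_r(S_k)$ and $\|S\|\ge\lambda_r(S)$ are immediate, because $S_k,S\succeq 0$ by \eqref{eq:Loewner} and so their spectral norm equals their top eigenvalue. For the leftmost inequalities I would iterate \eqref{eq:recurrence}: from $\|S_{k+1}\|\le\|\Gamma_A\|+\beta^2\|\Gamma_A\|\,\|S_k\|$ one gets $\|S_k\|\le\|\Gamma_A\|\sum_{i=0}^{k-1}(\beta^2\|\Gamma_A\|)^i\le 2(1-\gamma^k)\|\Gamma_A\|$ by \eqref{eq:beta}, and $\|S\|\le 2\|\Gamma_A\|$ from the corresponding infinite series. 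It then remains to bound $\|\Gamma_A\|=\|[A\,\,A^T]\|^2$ (Lemma~\ref{bound}) by $\tfrac14\|[\Upsilon\,\,\Upsilon^T]\|^2\delta^4$, which follows from $\|[A\,\,A^T]\|\le\|[M\,\,M^T]\|+\|[Y\,\,Y^T]\|$, the estimate $\|[M\,\,M^T]\|\le m_{max}nf(n)\|[\Upsilon\,\,\Upsilon^T]\|\le\tfrac14\delta^2\|[\Upsilon\,\,\Upsilon^T]\|$ from \eqref{eq:MMt} (using $\delta^2=4mnf(n)$, $m_{max}\le m$ and $\|[\Upsilon\,\,\Upsilon^T]\|\ge 1$), and $\|[Y\,\,Y^T]\|\le\sqrt2\,\delta$, which is negligible against $\delta^2$ once $n$ is large.

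For the rightmost inequalities $\lambda_r(S_k)\ge\tfrac12\lambda_r(T_k)$ and $\lambda_r(S)\ge\tfrac12\lambda_r(T)$ I would use Weyl's perturbation inequality, $\lambda_r(S_k)\ge\lambda_r(T_k)-\|S_k-T_k\|$. Theorem~\ref{deltabound} and Lemma~\ref{lem:missingbit} give $\|S_k-T_k\|\le 4\|\Gamma_A-\Gamma_M\|=O(\delta^3)$, whereas Lemma~\ref{res:signal_eig_Tk_T} gives $\lambda_r(T_k)\ge c\,\delta^4$ with $c=\left[\tfrac{\sigma_r([\Upsilon\,\,\Upsilon^T])}{4q}\tfrac{m_{min}}{m_{max}}\right]^2>0$ a constant independent of $n$ and $k$. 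Hence, for $n$ large, $\|S_k-T_k\|\le\tfrac12\lambda_r(T_k)$ and the claim follows; the $S$-versus-$T$ statement is obtained the same way from the $\|S-T\|$ form of Theorem~\ref{deltabound} and $\lambda_r(T)\ge\lambda_r(T_k)$.

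The only genuinely new point is the bound on $\lambda_{r+1}(S_k)$, for which the naive $\lambda_{r+1}(S_k)\le\lambda_{r+1}(T_k)+\|S_k-T_k\|=\|S_k-T_k\|=O(\delta^3)$ is too weak by a factor $\delta$. Instead I would use a symmetric factorization. Unrolling \eqref{eq:recurrence} and the identity $\Gamma_W[XX^T]=[WX\,\,W^TX][WX\,\,W^TX]^T$ gives $S_k=\sum_{i=1}^k\beta^{2(i-1)}C_iC_i^T$ with $C_1=[A\,\,A^T]$ and $C_{i+1}=[AC_i\,\,A^TC_i]=[A\,\,A^T](I_2\otimes C_i)$, so that $\|C_i\|\le\|[A\,\,A^T]\|^i$; similarly $T_k=\sum_{i=1}^k\beta^{2(i-1)}D_iD_i^T$ with $D_1=[M\,\,M^T]$, $D_{i+1}=[M\,\,M^T](I_2\otimes D_i)$, so each $D_i$ has its columns in $\mathrm{range}([M\,\,M^T])$, a subspace of dimension $r$ by \eqref{eq:MMt}. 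Let $P$ be the orthogonal projector onto that subspace; applying Courant--Fischer with the $r$-dimensional test subspace $\mathrm{range}(P)$ gives $\lambda_{r+1}(S_k)\le\|(I-P)S_k(I-P)\|$. The key observation is that $(I-P)M=(I-P)M^T=0$, so $(I-P)A=(I-P)Y$, $(I-P)A^T=(I-P)Y^T$, and therefore $(I-P)C_{i+1}=(I-P)[Y\,\,Y^T](I_2\otimes C_i)$, whence $\|(I-P)C_i\|\le\|[Y\,\,Y^T]\|\,\|[A\,\,A^T]\|^{i-1}$ for every $i\ge1$. Since $(I-P)S_k(I-P)=\sum_{i=1}^k\beta^{2(i-1)}((I-P)C_i)((I-P)C_i)^T\succeq0$,
\[
\lambda_{r+1}(S_k)\le\sum_{i=1}^k\beta^{2(i-1)}\|(I-P)C_i\|^2\le\|[Y\,\,Y^T]\|^2\sum_{i=0}^{k-1}(\beta^2\|\Gamma_A\|)^i\le 2\delta^2\cdot 2(1-\gamma^k)=4(1-\gamma^k)\delta^2,
\]
where I used $\|[A\,\,A^T]\|^2=\|\Gamma_A\|$ (Lemma~\ref{bound}), $\|[Y\,\,Y^T]\|^2\le 2\delta^2$, and $\sum_{i=0}^{k-1}g^i=\tfrac{1-g^k}{1-g}\le\tfrac{1-\gamma^k}{1-\gamma}\le 2(1-\gamma^k)$ for $0\le g=\beta^2\|\Gamma_A\|\le\gamma\le\tfrac12$; letting $k\to\infty$ yields $\lambda_{r+1}(S)\le 4\delta^2$.

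I expect the main obstacle to be precisely this last step — recognizing that the ``noise'' subspace of $S_k$ is controlled by $[Y\,\,Y^T]$ alone and not by the much larger perturbation $S_k-T_k$, because projecting off $\mathrm{range}([M\,\,M^T])$ annihilates $M$ in every layer of the Krylov-type factorization, turning the recursion into a purely noise-driven geometric series with ratio $\beta^2\|\Gamma_A\|\le\gamma\le\tfrac12$. Everything else is Weyl's inequality, geometric summation, and the order-of-magnitude comparisons that hold because $\delta\to\infty$; I would nevertheless verify at the outset the elementary identities $\Gamma_W[XX^T]=[WX\,\,W^TX][WX\,\,W^TX]^T$ and $S_k=\sum_{i=1}^k\beta^{2(i-1)}C_iC_i^T$, and note that all the thresholds ``$n$ large enough'' are uniform in $k$ and depend only on $q,m_{min},m_{max}$ and $\Upsilon$.
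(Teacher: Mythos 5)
Your proposal is correct, and three of its four parts coincide with the paper's proof: the middle inequalities from positive semi-definiteness, the upper bound $\|S_k\|\le\|\Gamma_A\|\frac{1-\gamma^k}{1-\gamma}\le\frac12(1-\gamma^k)\|[\Upsilon\,\,\Upsilon^T]\|^2\delta^4$ via the geometric summation of the recurrence and $\|[A\,\,A^T]\|\le\|[M\,\,M^T]\|+\|[Y\,\,Y^T]\|\le\frac12\|[\Upsilon\,\,\Upsilon^T]\|\delta^2$, and the signal bound $\lambda_r(S_k)\ge\lambda_r(T_k)-\|S_k-T_k\|\ge\lambda_r(T_k)/2$ from Weyl's inequality together with Theorem~\ref{deltabound}, Lemma~\ref{lem:missingbit} and Lemma~\ref{res:signal_eig_Tk_T}. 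Where you genuinely diverge is the noise bound. The paper uses the Loewner-monotonicity of $\Gamma_A$ to get $S_k\preceq\frac{1-\gamma^k}{1-\gamma}S_1$, which reduces everything to the single matrix $S_1=[A\,\,A^T][A\,\,A^T]^T$, and then applies Weyl's inequality for singular values, $\sigma_{r+1}([A\,\,A^T])\le\sigma_{r+1}([M\,\,M^T])+\|[Y\,\,Y^T]\|=\|[Y\,\,Y^T]\|$, exploiting that $[M\,\,M^T]$ has rank $r$. You instead unroll $S_k=\sum_{i=1}^k\beta^{2(i-1)}C_iC_i^T$ and annihilate $M$ in every layer by projecting off $\mathrm{range}([M\,\,M^T])$ before invoking Courant--Fischer; I checked the factorization, the recursion $C_{i+1}=[A\,\,A^T](I_2\otimes C_i)$, and the estimate $\|(I-P)C_i\|\le\|[Y\,\,Y^T]\|\,\|[A\,\,A^T]\|^{i-1}$, and they are all valid, landing on exactly the same constant $4(1-\gamma^k)\delta^2$. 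Both arguments rest on the same rank-$r$ structure of $[M\,\,M^T]$; the paper's is shorter (one Loewner comparison plus one singular-value Weyl inequality), while yours makes explicit, term by term, why the ``noise'' subspace is driven by $[Y\,\,Y^T]$ alone and not by the much larger perturbation $S_k-T_k$ -- a decomposition that would also give finer, layerwise control if one wanted it.
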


The gaps $\lambda_r(S_{k})-\lambda_{r+1}(S_{k})$ and $\lambda_r(S_{k})/\lambda_{r+1}(S_{k})$ between signal and noise, are expected to be large enough to allow for the correct truncation for the SVD of $S_k$, and a correct assignment of the different nodes in each ``role", as we will show in the next section.
This separation becomes more pronounced when the dimensions of the matrix and its subgroups increase, as we can see by applying Lemma \ref{res:signal_eig_Tk_T} and Theorem \ref{thm:noise_bound_Sk_S}.
\begin{itemize}
	\item For the absolute gap,
	\[
	\lambda_r(S_{k})-\lambda_{r+1}(S_{k}) \ge 
	\frac{\lambda_r(T_k)}2 - 	4(1-\gamma^{k}) \delta^2 
	= \Omega(\delta^4) 
	\]
	that is order of magnitudes greater than the following absolute gaps,
	since
	\[
	\lambda_i(S_{k})-\lambda_{i+1}(S_{k}) \le 
	\lambda_{r+1}(S_{k}) = O(\delta^2),
	\qquad \forall i>r.
	\]
	\item 	For the relative gap,
	\[
	\frac{\lambda_r(S_{k})}{\lambda_{r+1}(S_{k})}
	\ge 
	\frac{\lambda_r(T_k)}{8(1-\gamma^{k}) \delta^2 }
	= \Omega(\delta^2)
	\]
	that is order of magnitudes greater than the previous relative gaps,
	since
	\[
	\frac{\lambda_i(S_{k})}{\lambda_{i+1}(S_{k})}\le 
	\frac{\|S_k\|}{\lambda_r(S_k)} = O(1)
	,
	\qquad \forall i<r.
	\]
\end{itemize}
As a consequence, a comparison of the gaps between signal and noise with the other gaps is a clear indicator of the right rank $r$ with which one should perform the truncated SVD in the algorithm. This holds also for the limit matrix $S$.

We can note that all the estimates get worse as $\sigma_r([\Upsilon\,\, \Upsilon^T])$ gets close to zero.
This has to be expected since it is harder to compute the rank of an almost singular matrix. 
In fact, for example, in the case where all the probabilities $\theta_{a,b}$ are close to each other, it is harder to distinguish between different groups and clusters.

\subsection{Dominant Subspaces}

In this subsection, we study the dominant subspace of a real symmetric matrix $S_k$ (i.e. the invariant subspace associated with the largest $r$ eigenvalues) and argue why, for sufficiently large $n$, it allows role extraction. Classically, distances between subspaces are measured via the concept of principal angles \cite{jordan}: a multidimensional generalization of the acute angle between the unit vectors $u,v$, i.e., $0 \leq \theta(u,v):=\arccos |u^Tv| \leq \pi/2$. More generally, if $\mathcal{U}, \mathcal{V}$ are subspaces whose orthonormal bases are given, respectively, as the columns of the matrices $U,V$, then the $\min\{ \dim \mathcal{U}, \dim \mathcal{V} \}$ largest singular values of $U^TV$ are the sines of the principal angles between $\mathcal{U}^\perp$ (orthogonal complement of $\mathcal{U}$) and $\mathcal{V}$.  Just as in the one-dimensional case, the principal angles between two subspaces are all zero if and only if the subspaces coincide, and more generally the smaller the principal angles the closer the subspaces.

To set up notation, fix $k\in \f N$, and let 
$E$ and $F$ be the dominant subspaces of dimension $r$ for $S_k$ and $T_k$ respectively.  By classical results in geometry and linear algebra \cite{daviskahan,ipsen}, the $r$ largest singular values of the matrix
\begin{equation}\label{eq:proj_to_sin}
	\sin \Theta := \Pi_E - \Pi_F  
\end{equation}
are the sines of the principal angles between the dominant subspaces of $T_k$ and that of $S_k$, where $\Pi_E$ and $\Pi_F$ are the orthogonal projection matrices on the relative subspaces. Hence, the spectral norm of $\sin \Theta$ measures how well the dominant subspace of the similarity matrix $S_k$ approximates the one of the ideal graph.

We rely on Davis-Kahan's sine theta theorem \cite{daviskahan}, in the form given by \cite[Theorem 5.3]{ipsen}. Call $\wh S_k$ the best $r$-rank approximation of $S_k$. 
Since the $r$-th eigenvalue of $T_k$ is larger than the $(r+1)$-th eigenvalue of $\wh S_k$ (which is $0$), the assumptions of \cite[Theorem 5.3]{ipsen} apply and thus
\begin{align*}
	\| \sin \Theta \| &\leq \frac{\|T_k-\wh S_k\|}{ \lambda_r(T_k)} \le
	\frac{\|T_k- S_k\|}{ \lambda_r(T_k)} + 
	\frac{\|S_k-\wh S_k\|}{ \lambda_r(T_k)}\\&
	\le
	\frac{2\|T_k- S_k\|}{ \lambda_r(T_k)}.
\end{align*}
\begin{remark}
	We could apply \cite[Theorem 5.3]{ipsen} reverting the roles of $S_k$ and $T_k$, obtaining
	\[
	\| \sin \Theta \| \le  \frac{\| S_k-T_k\| }{\lambda_r(S_k)}. 
	\]
	In this case, though, we prefer to deal with the deterministic quantity $\lambda_r(T_k)$ instead of the aleatory $\lambda_r(S_k)$, even if the estimation gets worse by  a constant factor 2.
\end{remark}
Using the results of the previous section, in turn this yields for sufficiently large $n$
\begin{equation}\label{eq:sin}
	\| \sin \Theta \| \le
	\frac{4\sqrt 2	\delta^3\|[\Upsilon\,\,\Upsilon^T]\|
		+
		16\delta^2  }{
		\left[ 
		\frac
		{\sigma_r([\Upsilon\,\,\Upsilon^T])}{4q}  \frac{m_{min}}{m_{max}}
		\right]^2
		\delta^4
	} 
	= O(\delta^{-1}).
\end{equation}
Therefore, we can state
\begin{corollary}\label{angle}
	Asymptotically as $n \rightarrow \infty$, the principal angles between the dominant subspaces of $S_k$ and $T_k$ tend to $0$ at least as fast as $\delta^{-1}$.
\end{corollary}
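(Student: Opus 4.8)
The statement is the qualitative reading of the quantitative bound already assembled in \eqref{eq:sin}, so the plan is simply to package that chain of inequalities and then pass from the spectral norm $\|\sin\Theta\|$ to the individual principal angles. First I would recall, from \eqref{eq:proj_to_sin} and the classical characterisation of principal angles in \cite{daviskahan,ipsen}, that the singular values of $\sin\Theta = \Pi_E - \Pi_F$ are exactly the sines of the principal angles $\theta_1 \ge \cdots \ge \theta_r$ between the dominant subspaces $E$ of $S_k$ and $F$ of $T_k$; hence $\|\sin\Theta\| = \sin\theta_1$ and $0 \le \theta_i \le \theta_1 = \arcsin\|\sin\Theta\|$ for all $i$. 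Since $\arcsin t \le \tfrac{\pi}{2}t$ on $[0,1]$, it suffices to prove $\|\sin\Theta\| = O(\delta^{-1})$: then $\theta_i \le \tfrac{\pi}{2}\|\sin\Theta\| = O(\delta^{-1})$ uniformly in $i$, and in particular every $\theta_i \to 0$ because $\delta = 2\sqrt{mnf(n)} \to \infty$ (by \eqref{eq:fn}, using that $m$ is bounded between absolute constants).

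To obtain $\|\sin\Theta\| = O(\delta^{-1})$ I would reproduce the derivation of \eqref{eq:sin}. The bound $\|\sin\Theta\| \le 2\|T_k - S_k\|/\lambda_r(T_k)$ follows from the Davis--Kahan $\sin\Theta$ theorem in the form \cite[Theorem 5.3]{ipsen}; the only hypothesis to verify is the eigenvalue gap, and indeed the $(r+1)$-th eigenvalue of the best rank-$r$ approximation $\wh S_k$ vanishes while, by Lemma \ref{res:signal_eig_Tk_T}, $\lambda_r(T_k) \ge \big[\tfrac{\sigma_r([\Upsilon\,\,\Upsilon^T])}{4q}\tfrac{m_{min}}{m_{max}}\big]^2 \delta^4 > 0$ for $n$ large enough, positivity resting on the fact that $f(n)\Upsilon$ is a minimal role matrix (so $\sigma_r([\Upsilon\,\,\Upsilon^T]) > 0$) and that $q, m_{min}, m_{max}$ do not depend on $n$. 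For the numerator, Theorem \ref{deltabound} gives $\|S_k - T_k\| \le 4\|\Gamma_A - \Gamma_M\|$ and Lemma \ref{lem:missingbit} gives $\|\Gamma_A - \Gamma_M\| \le \delta^3\|[\Upsilon\,\,\Upsilon^T]\|/\sqrt 2 + 2\delta^2$; since $\|[\Upsilon\,\,\Upsilon^T]\|$ is a fixed constant and $\delta \to \infty$, the cubic term dominates and the numerator is $O(\delta^3)$. Dividing by the denominator $\lambda_r(T_k) = \Omega(\delta^4)$ --- again with constant independent of $n$ --- reproduces the chain in \eqref{eq:sin} and yields $\|\sin\Theta\| = O(\delta^{-1})$. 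Exactly the same estimates apply with $S_k, T_k$ replaced by their limits $S, T$, using the versions of Theorem \ref{deltabound} and Lemma \ref{res:signal_eig_Tk_T} stated for $S, T$. As usual all of this holds for $n$ large enough and almost surely.

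There is no substantial obstacle: every ingredient --- Theorem \ref{deltabound}, Lemma \ref{lem:missingbit}, Lemma \ref{res:signal_eig_Tk_T}, and the Davis--Kahan theorem --- has been established in the preceding subsections, and the corollary is a bookkeeping consequence. The only points deserving a line of care are (i) checking that the implied constant in $O(\delta^{-1})$ is finite and independent of $n$, which reduces to $\sigma_r([\Upsilon\,\,\Upsilon^T]) > 0$, guaranteed by minimality of $f(n)\Upsilon$; and (ii) the passage from the norm bound on $\sin\Theta$ to a bound on each principal angle, handled by the near-linearity of $\arcsin$ at $0$.
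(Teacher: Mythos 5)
Your proposal is correct and follows essentially the same route as the paper: the corollary is obtained exactly as you describe, by combining the Davis--Kahan bound $\|\sin\Theta\|\le 2\|T_k-S_k\|/\lambda_r(T_k)$ with Theorem \ref{deltabound}, Lemma \ref{lem:missingbit} and Lemma \ref{res:signal_eig_Tk_T} to get \eqref{eq:sin}, i.e.\ $\|\sin\Theta\|=O(\delta^{-1})$. Your extra remark on passing from $\|\sin\Theta\|$ to the individual angles via $\arcsin$ is a harmless (and slightly more explicit) finishing touch; note only that $\sigma_r([\Upsilon\,\,\Upsilon^T])>0$ already follows from the definition of $r$ as the rank of $f(n)[\Upsilon\,\,\Upsilon^T]$, without invoking minimality.
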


\section{Clustering Error} \label{sec:Clustering_error}

In the previous sections we have estimated how close the matrix $S_k$ is to the deterministic matrix $T_k$ and how this influences their spectral properties and their dominant subspaces. Here we show that the same estimates can be used to bound the clustering error of the proposed method on $S_k$, under the technical hypothesis $r=q$ that is, the matrix $f(n)[\Upsilon\,\,\Upsilon^T]$ is full rank. Note that $\Upsilon$ is still allowed to be singular.\\

Recall that the model is generated by the clusters $\mathcal C_1, \dots,\mathcal C_q$, where $\mathcal C_i$ has cardinality $n_i:= m_in$. Suppose that $\mathcal T_1, \dots,\mathcal T_q$ are the resulting clusters from the algorithm operated on the similarity matrix $S_k$. Define the misclassification error $\wh f$ as
\[
\wh f:= \min_{\pi\in\mathcal S_q}\max_{i=1,\dots,q} \frac{|\mathcal T_{\pi(i)} \,\triangle\, \mathcal C_i |}{|\mathcal C_i|}
\]
where $\triangle$ is the symmetric difference of sets defined as the elements belonging to exactly one of the two sets, or equivalently $A\triangle B := (A\setminus B) \cup (B\setminus A)$. $\mathcal S_q$ is the $q$-th symmetric group, that contains all the permutations on $q$ elements.
$\wh f$ is thus a measure of the maximum rate of misclassified points over all clusters, up to the assignment of the correct clusters $\mathcal C_i$ to the  $\mathcal T_i$ derived by the algorithm. In the appendix, we give a proof for the following bound on $\wh f$.

\begin{theorem}\label{thm:misclassification_error}
	There exists an absolute constant $C$ such that asymptotically in $n$ 
	\begin{align*}
		\wh f 
		&\le Cq
		\frac {m_{max}}{m_{min}}  
		\|\sin(\Theta)\|^2
		\le C\frac{q^5}{\delta^2}
		\frac {m_{max}^5}{m_{min}^5}  
		\frac{	\|[\Upsilon\,\,\Upsilon^T]\|^2
		}{
			\sigma_q([\Upsilon\,\,\Upsilon^T])^4
		}\\&= 
		O\left(\frac 1{nf(n)} \right)
		.
	\end{align*}
\end{theorem}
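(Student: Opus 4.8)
The plan is to combine the explicit description of the ideal low-rank factor in Theorem~\ref{thm:lowrankfactor} with the subspace estimate \eqref{eq:sin} and a standard $K$-means perturbation argument, in the style of spectral-clustering analyses for undirected SBM. The first step is to make the geometry of the population ``centroids'' explicit. Since $r=q$, Theorem~\ref{thm:lowrankfactor} gives $T_k=Z\wh T_kZ^T$ with $\wh T_k$ SPD, so the dominant $q$-dimensional subspace of $T_k$ is the column space of $Z$, and hence the orthonormal basis $V_k$ factors as $V_k=ZR$ for some invertible $R\in\R^{q\times q}$. Orthonormality of $V_k$ together with $Z^TZ=\diag(n_1,\dots,n_q)$ forces $R^T\diag(n_i)R=I_q$, i.e.\ $R=\diag(n_i)^{-1/2}Q$ for an orthogonal $Q$. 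Thus row $\ell$ of $V_k$ equals row $\xi(\ell)$ of $R$, so $V_k$ has exactly $q$ distinct rows $\mu_1,\dots,\mu_q$ (one per cluster), pairwise orthogonal with $\|\mu_a\|^2=1/n_a$; in particular their minimal separation satisfies $\Delta:=\min_{a\ne b}\|\mu_a-\mu_b\|=\min_{a\ne b}\sqrt{1/n_a+1/n_b}\ge\sqrt{2/(m_{max}n)}$.

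Next I would transfer this structure to $S_k$. Let $U_k$ be an orthonormal basis of the dominant $q$-dimensional invariant subspace of $S_k$ (the exact object for which the iterative economy-size SVD of the algorithm is a cheap surrogate). By the rotated (Frobenius) Davis--Kahan bound \cite{daviskahan}, after an orthogonal change of basis --- which affects neither the cluster structure of the rows of $V_k$, nor their pairwise distances, nor $\Delta$ --- we may assume $\|U_k-V_k\|_F\le\sqrt2\,\|\sin\Theta\|_F\le 2\sqrt q\,\|\sin\Theta\|=:\eta$, and by \eqref{eq:sin} we have $\eta=O(\delta^{-1})$. Because $V_k$ has only $q$ distinct rows it is a feasible clustering for $K$-means on the rows of $U_k$, so the matrix $\wh C$ formed from the fitted centroids (row $\ell$ equal to the centroid assigned to node $\ell$) obeys $\|\wh C-U_k\|_F\le\|V_k-U_k\|_F\le\eta$ at a global minimiser; only the absolute constant changes if $K$-means is solved up to a constant factor, and in either case $\|\wh C-V_k\|_F\le 2\eta$.

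Now one counts the misclassified nodes. Declaring a node $\ell$ \emph{bad} if $\|\wh C_\ell-\mu_{\xi(\ell)}\|\ge\Delta/2$ and letting $B$ be the set of bad nodes, the inequality $(\Delta^2/4)\,|B|\le\|\wh C-V_k\|_F^2\le 4\eta^2$ gives $|B|\le 16\eta^2/\Delta^2\le 32\,q\,m_{max}\,n\,\|\sin\Theta\|^2$. For a good node $\ell$, the triangle inequality together with the definition of $\Delta$ shows that $\wh C_\ell$ is strictly closer to $\mu_{\xi(\ell)}$ than to any other $\mu_b$; hence good nodes of one true cluster all share a common fitted centroid and good nodes of distinct true clusters receive distinct ones. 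Since $\|\sin\Theta\|^2=O(\delta^{-2})=O(1/(nf(n)))$ and $nf(n)\to\infty$ by \eqref{eq:fn}, for $n$ large one has $|B|<m_{min}n$, so every true cluster contains a good node and the induced centroid correspondence is a bijection, realised by some $\pi\in\mathcal S_q$. Then $\mathcal T_{\pi(i)}\triangle\mathcal C_i\subseteq B$ for every $i$, so $\wh f\le|B|/(m_{min}n)\le 32\,q\,\tfrac{m_{max}}{m_{min}}\,\|\sin\Theta\|^2$; substituting the explicit bound \eqref{eq:sin} for $\|\sin\Theta\|$ and $\delta^2=4mnf(n)$ then produces the $q^5$-dependent estimate and the rate $O(1/(nf(n)))$.

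The perturbation inequalities and the constant-chasing are routine. The genuinely delicate point --- and where the ``asymptotically in $n$'' hypothesis enters --- is the bijection step: it is valid only once $|B|<m_{min}n$, i.e.\ once $\|\sin\Theta\|^2=o(1/n)$, which forces $nf(n)\to\infty$. A secondary obstacle is that the subspace actually clustered by the algorithm is not exactly the dominant subspace of $S_k$; one must verify that it still lies within $O(\delta^{-1})$ of it, which can be folded into the Davis--Kahan step via the spectral bounds of Section~\ref{sec:Spectral_bounds} at the cost of enlarging the absolute constant $C$.
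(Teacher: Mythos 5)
Your proposal is correct and follows essentially the same route as the paper: it identifies the $q$ mutually orthogonal population centroids of norm $1/\sqrt{n_a}$ coming from $T_k=Z\wh T_kZ^T$, aligns the empirical and population orthonormal bases in Frobenius norm via a rotated Davis--Kahan/projection bound (the paper uses Lemma~\ref{Zhixin19} for this), and converts the basis error plus the centroid separation into a misclassification count with the same final rate $O(1/(nf(n)))$. The only real difference is that you inline the standard $K$-means counting and pigeonhole argument where the paper invokes it as a black box (Theorem~\ref{br}), and you explicitly flag the gap --- also present but unaddressed in the paper --- between the subspace produced by the iterative algorithm and the exact dominant subspace of $S_k$.
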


Note that the error goes asymptotically to zero as long as $nf(n)\to \infty$, which is exactly  condition \eqref{eq:fn}.

\begin{remark}
	The proof of the Theorem follows the same steps as  \cite{Qing21} and  \cite{Ads2}. In particular, in the former we find a similar algorithm applied directly on the adjacency matrix $A$ instead of $S_k$, but the analysis is limited to the case where $f(n)\Upsilon$ has full rank, while we work under the more general condition that $f(n)[\Upsilon\,\, \Upsilon^T]$ is full rank. 
	
	Observe moreover that all the results of Section \ref{sec:Spectral_bounds}  hold without any assumption on $\Upsilon$, so we still have all the spectral bounds and the convergence of the dominant subspace of $S_k$ to the one of $T_k$  also in the general case.
	
	Yet, for the algorithm to make sense, we need $f(n)\Upsilon$ to be a minimal role matrix as defined in Definition \ref{def:min}.
	Moreover, since $r<q$, it is necessary to apply the $K$-means algorithm on $S_k$ for $K=r,r+1,r+2,\dots$ and look at the error in order to find the optimal number of clusters.  
	
\end{remark}

\section{Numerical examples} \label{sec:numerics}

In this section we illustrate the theoretical results of the paper using an example generated according to
the rules of a Stochastic Block Model  
where only two different probabilities are used, namely $p$ and $1-p$. 
We chose $q=3$, $m_1=m_2=m_3=10$ and hence $m=30$, and 
\begin{equation}\Upsilon = p \left[\begin{array}{ccc} 0 & 1 & 0 \\ 0 & 0 & 1 \\ 1 & 0 & 0 \end{array}\right] + (1-p) \left[\begin{array}{ccc} 1 & 0 & 1 \\ 1 & 1 & 0 \\ 0 & 1 & 1 \end{array}\right].
\end{equation} 
%with only two different probabilities for each block, namely $p$ and $1-p$. 
We then ran simulations for
matrices $A$ with $n=10:50$.

In Figure 2 we took $\beta=0$ which means that the sequences $S_k$ and $T_k$ are constant after one step, and hence that $S=S_1$ and $T=T_1$. The $q$ dominant eigenvalues of $S_1$ are the circles in each plot (because of the structure of $\Upsilon$ there are two repeated ones). The full lines are their estimates obtained from the rank $q$ matrix $T_1$, and clearly they are very accurate estimates as indicated in Theorem \ref{deltabound}. The squares correspond to the ``noise" eigenvalue $\lambda_{r+1}(S_1)$ and the dashed line is its estimate 
$(3+\sqrt{8})p(1-p)mn$ according to Conjecture \ref{conj} and $\lambda_{r+1}(S_1) = \sigma_{r+1}([A\,\,A^T])^2 \le \|[Y\,\,Y^T]\|^2$. It is clear from these plots that this is also a very good estimate and that the ratio 
$\lambda_{r}(S_1)/\lambda_{r+1}(S_1)$ grows like
${\cal O}(n)$. Moreover,  the plots show that the gap  $|\lambda_{r}(S_1)-\lambda_{r+1}(S_1)|$ shrinks with $p$ getting closer to $0.5$, which is expected since for $p=0.5$ the rank of $\Upsilon$ drops to 1. This means that for $p$ getting closer to $0.5$, one has to require
larger dimensions of the graph in order to recover an accurate enough grouping.

\begin{figure}[ht] \label{F2}
	\begin{center}
		\includegraphics[width=8cm, height=6cm]{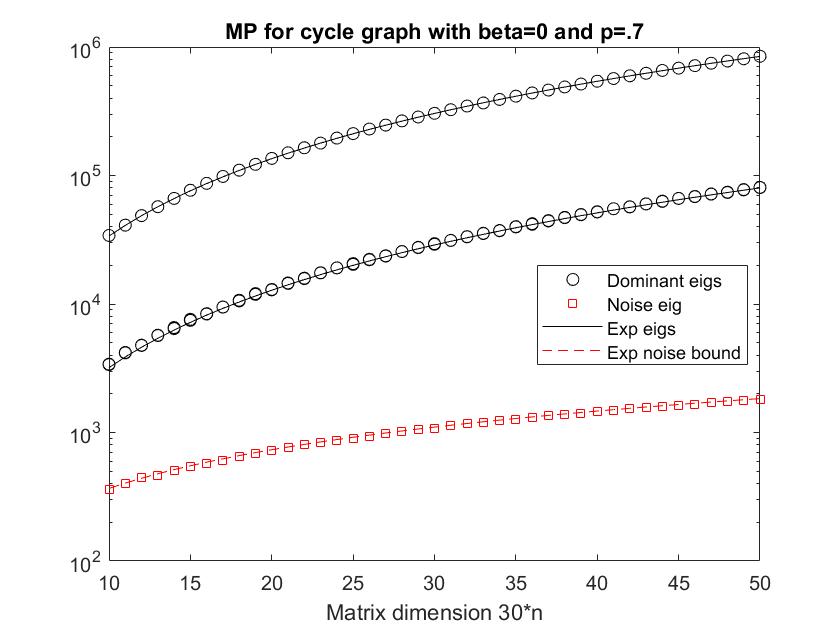}  \includegraphics[width=8cm, height=6cm]{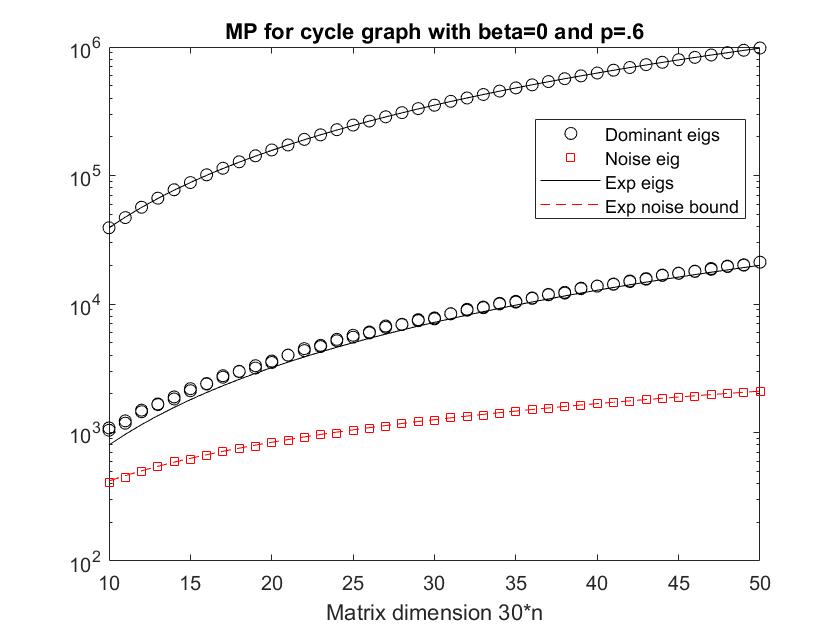}   
		\caption{Eigenvalues of $S_1$ and $T_1$ of a cycle graph for increasing $n$ and varying probabilities.}
	\end{center}
\end{figure}

In Figure 3, we performed the same experiment, but now with $\beta$ chosen such that 
$\|\beta^2\Gamma_A(\cdot)\|\approx \frac12$, which guarantees convergence of the method.
In order to reduce the complexity of the method, we computed $S_{10}$ and $T_{10}$ rather than the limits $S$ and $T$, since in 10 steps we should have reasonably good estimates of these limits. We can see from the plots that 
one has to wait for larger values of $n$ to reach a sufficiently large gap  $|\lambda_{r}(S_{10})-\lambda_{r+1}(S_{10})|$ than for $|\lambda_{r}(S_1)-\lambda_{r+1}(S_1)|$ in Figure 2.

\begin{figure}[ht] \label{beta}
	\begin{center}
		\includegraphics[width=8cm,height=6cm]{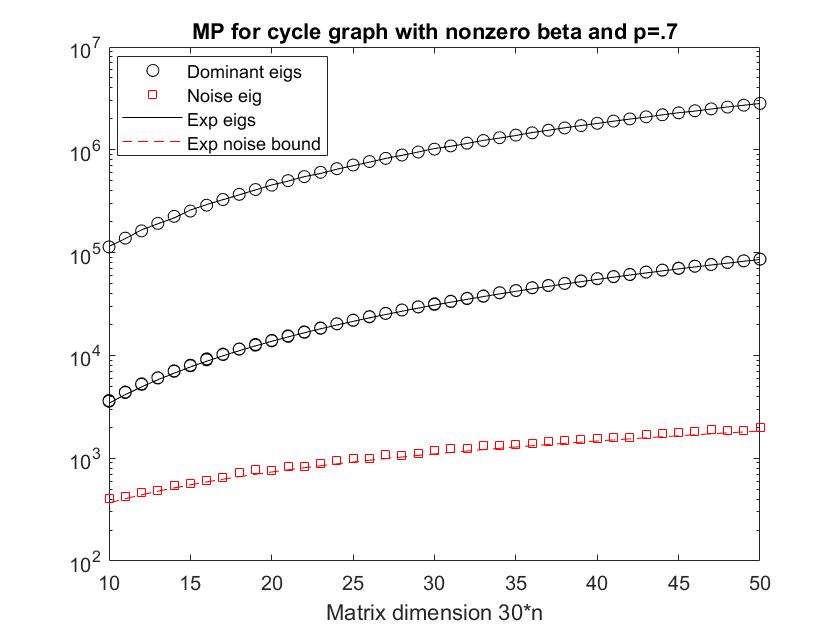}  
		\includegraphics[width=8cm, height=6cm]{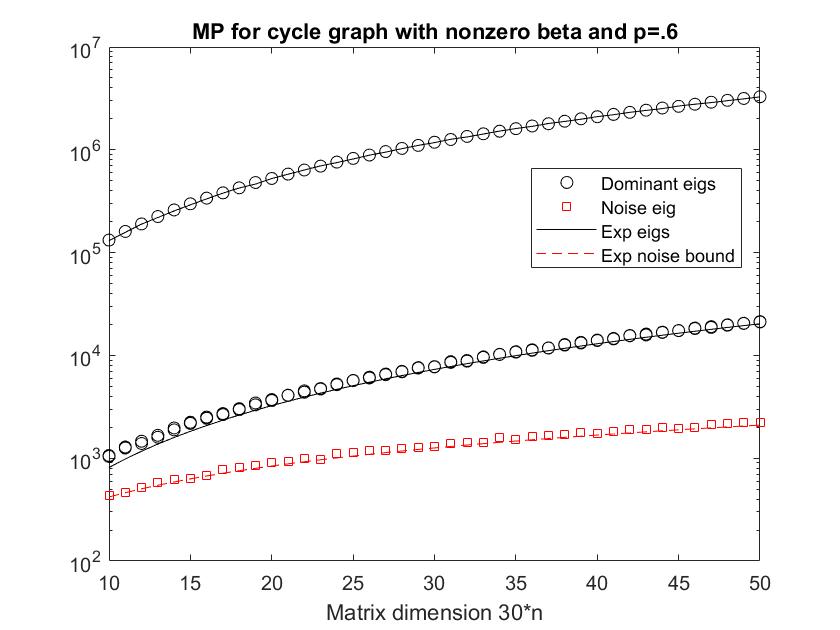}
		\caption{Eigenvalues of $S_{10}$ and $T_{10}$ of a cycle graph for increasing $n$ and varying probabilities}   
	\end{center}
\end{figure}

In Figure 4, we computed the misclassification error $\hat f$ of the clustering associated to the matrix $\Upsilon$ for $p= 0.6$. Using the same parameters $m_i, n$ as before we show an averaged $\hat f$ over 60000 instances for the clusters extracted from $S_1$ and $S_{10}$, where we took $\beta = (2\|[A\ A^T]\|^2)^{-1}$. For comparison, we also plot the function $3/(10n + 24)$ and note that it fits well both plots, thus confirming the bound $O(1/n)$ predicted by Theorem \ref{thm:misclassification_error}.   

%betaq = (2*norm(R)^2)^(-1);
%3/(NN+24)

\begin{figure}[ht] \label{Miscl_error}
	\begin{center}
		\includegraphics[width=\textwidth]{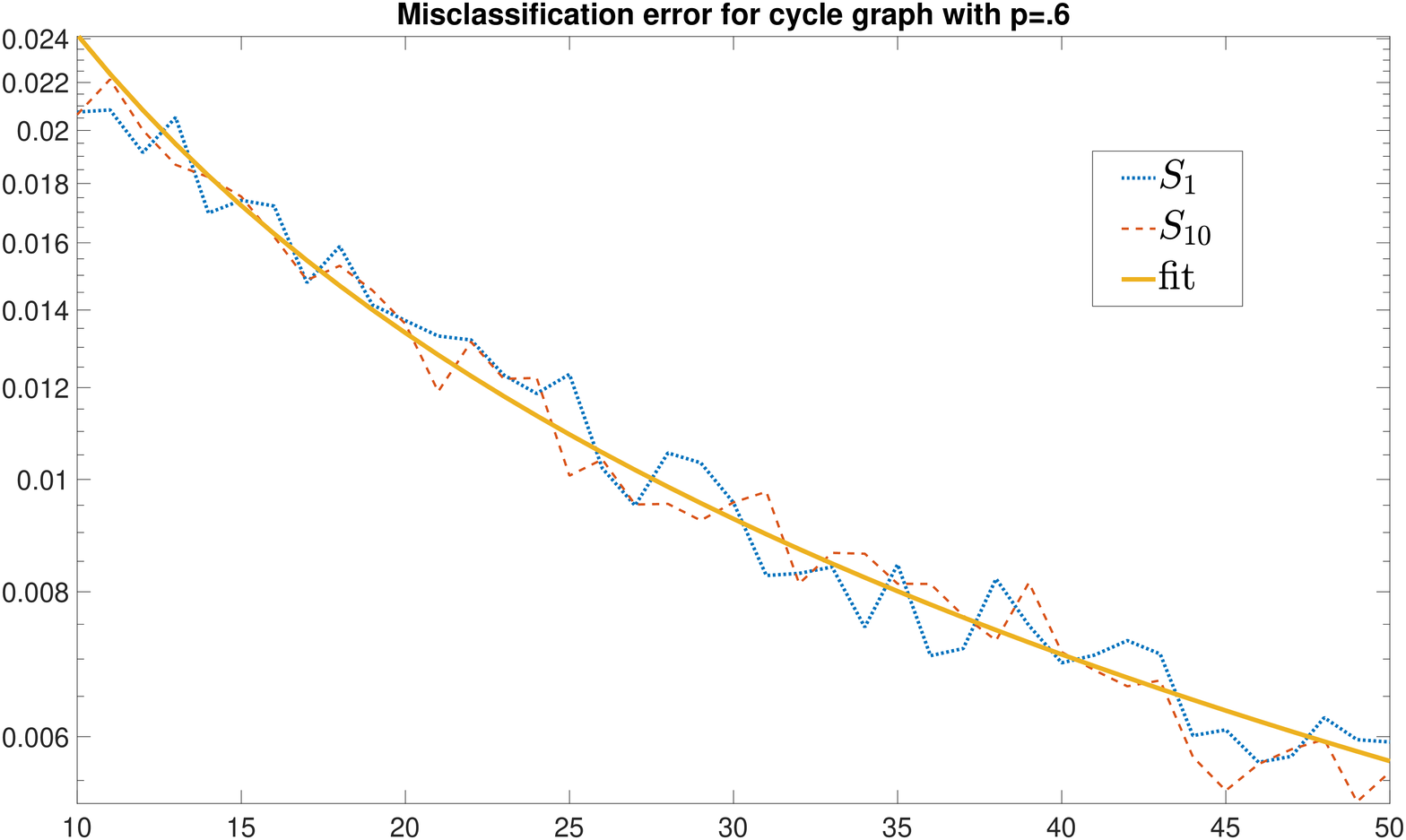}
		\caption{Average misclassification error for the cycle graph relative to $S_1$ and $S_{10}$, and a fitted $O(1/n)$ function for comparison.}   
	\end{center}
\end{figure}

\section{Concluding remarks} \label{sec:conclusion}
In this paper, we showed that the Neighbourhood Pattern Similarity matrices $S_k$ of a directed graph with adjacency matrix $A$ have spectra that are well separated into two groups of eigenvalues, provided that the graph is sufficiently large and that it is generated according to the Stochastic Block Model with blocks where all elements in each block follow a Bernoulli distribution with the same probability $O(f(n))$ where $nf(n)\to\infty$.

The large eigenvalues are then associated with the nonzero eigenvalues of the expected  value $\E[A]$, which is a low rank matrix, and the small eigenvalues are associated with the mean and variance of the random distribution used in the Stochastic Block Model. Moreover, the gap between the ``large" eigenvalues and the ``small" ones grows with $n$. 
It then follows that the recovery of the nodes grouping of the SBM, can be based on the dominant eigenspace of the matrices $S_k$.

This analysis was primarily based on the recurrences defining the matrices $S$ and $T$ and on the fact that the underlying adjacency matrix is generated according to a Stochastic Block Model. It is likely that our results can be extended for other types of distributions and that weighted graphs can also be dealt with, but our analysis here was limited to unweighted adjacency matrices for directed graphs.

We point out that the same analysis could in principle be conducted in the sparse limit case $f(n) = O(1/n)$, but since most of the results are formulated asymptotically in $nf(n)$ one has to  explicitly compute all the implicit multiplicative constants. A technical work of refinement is also needed on each proposition to obtain the best constants and thus meaningful results. For these reasons, we postpone the  limit sparse case analysis to future work.     

% If you have acknowledgments, this puts in the proper section head.
\section*{acknowledgments}
	The authors thank the anonymous reviewers for useful comments.	Giovanni Barbarino and Vanni Noferini are supported by an Academy of Finland grant (Suomen Akatemian p\"{a}\"{a}t\"{o}s 331240). Giovanni Barbarino thanks the Alfred Kordelinin säätiö for the financial support under
	the Grant no. 210122. Paul Van Dooren is supported by an Aalto Science Institute Visitor Programme.

\appendix
\section{Operator $\Gamma$}
\subsection{Proof of Lemma \ref{bound}}

Note that
for all $X$
\begin{align*}
	\|\Gamma_W[X]\| &= 
	\left\| \left[\begin{array}{cc}W & W^T\end{array}\right] \left[\begin{array}{cc}X & 0 \\ 0 & X\end{array}\right] \left[\begin{array}{c}W^T \\ W\end{array}\right]\right\| \\&\le
	\| \left[\begin{array}{cc}W & W^T\end{array}\right] \|^2 \|X\|
\end{align*} 
and
\begin{align*}
	\| \left[\begin{array}{cc}W & W^T\end{array}\right] \|^2
	&=
	\left\| \left[\begin{array}{cc}W & W^T\end{array}\right]  \left[\begin{array}{c}W^T \\ W\end{array}\right]\right\|
	\\&= 
	\|WW^T + W^TW\|
	\le 2\|W\|^2
\end{align*}
Thus we have $ \|\Gamma_W\| \le \left[\begin{array}{cc}W & W^T\end{array}\right] \|^2\le 2\|W\|^2$, whose first bound is satisfied for $X=I$ since $$\|\Gamma_W[I]\|/\|I\|
=
\|WW^T+W^TW\|
= \| \left[\begin{array}{cc}W & W^T\end{array}\right] \|^2.$$

\subsection{Proof of Lemma \ref{lem:missingbit}}
For any matrix $X$, if we rewrite $\Gamma_A[X]-\Gamma_M[X]$ as
\begin{align*}
	\begin{bmatrix} Y & Y^T\end{bmatrix}
	\begin{bmatrix} X & 0 \\ 0 & X \end{bmatrix}&
	\begin{bmatrix} M^T \\ M\end{bmatrix} + \begin{bmatrix} M &  M^T \end{bmatrix}
	\begin{bmatrix} X & 0 \\ 0 & X \end{bmatrix}
	\begin{bmatrix} Y^T \\ Y\end{bmatrix}
	\\&+\begin{bmatrix} Y & Y^T \end{bmatrix}
	\begin{bmatrix} X & 0 \\ 0 & X \end{bmatrix}
	\begin{bmatrix} Y^T \\ Y\end{bmatrix}
\end{align*}
we readily see that
\begin{align*}
	\| \Gamma_A-\Gamma_M\|&= \sup_{ X \neq 0}  \frac{  \|  \Gamma_A[X]-\Gamma_M[X] \|}{ \| X \| } \\&\leq \| \begin{bmatrix}
		Y & Y^T
	\end{bmatrix} \|^2 + 2 \| \begin{bmatrix}
		M & M^T 
	\end{bmatrix}\| \| \begin{bmatrix}
		Y & Y^T
	\end{bmatrix} \| . 	
\end{align*}
Using  \eqref{eq:MMt} and \eqref{eq:Y}, and recalling that $\delta^2 =4mnf(n)$,
\[
\begin{array}{rcl}
\|\begin{bmatrix} Y & Y^T\end{bmatrix}\|^2 &\le& 2\|Y\|^2 \le 2\delta^2,\\
\|\begin{bmatrix} M & M^T\end{bmatrix}\|^2	&\le &
\left( m_{max}nf(n)\|[\Upsilon\,\,\Upsilon^T]\|  \right)^2
\\
&\le&
\delta^4\|[\Upsilon\,\,\Upsilon^T]\|^2/16.
\end{array}
\]

%\[
% \| \Gamma_A-\Gamma_M\| \le 
% 2\delta^2 +
% \delta^3\|[\Upsilon\,\,\Upsilon^T]\|/\sqrt 2
%\]

Then the desired bound follows easily. As for the last bound, note that $\delta^2 \sim nf(n)$ and in virtue of \eqref{eq:fn}, \eqref{eq:Mn} and \eqref{eq:Y} we have
\[
\|Y\| \le \delta \ll \frac{ \|\Upsilon\|}{4q}\frac{m_{min}}{m_{max}} \delta^2 \le  m_{min}nf(n) \|\Upsilon\| \le \|M\|.
\]
If we call $C$ the constant $\|\Upsilon\|m_{min}/4qm_{max}$, then
\begin{align*}
	\|A\|^2 &\ge \left( \|M\| -\|Y\| \right)^2
	\ge C^2\delta^4 - 2C\delta^3 +\delta^2 \\&\gg 
	\delta^3\|[\Upsilon\,\,\Upsilon^T]\|/\sqrt 2
	+2\delta^2 .
\end{align*}

\section{Spectral Bounds}
\subsection{Proof of Theorem \ref{deltabound}}

Denoting the increments $\Delta^S_{i+1}:=S_{i+1}-S_i$ and $\Delta^T _{i+1}:=T_{i+1}-T_i$  we obtain 
$$  S_{k+1} - T_{k+1} = \sum_{i=1}^{k+1}(\Delta^S_{i}-\Delta^T _{i}), \quad S_0=T_0=0.
$$
Observing that
\begin{align*}
	\Delta^S_{k+1}=\beta^2\Gamma_A[\Delta^S_{k}],& \quad \Delta^S_1=\Gamma_A[I_n], \\
	\Delta^T _{k+1}=\beta^2\Gamma_M[\Delta^T _{k}],& \quad \Delta^T _1=\Gamma_M[I_n],  
\end{align*}
and 
$$   \Delta^S_{k+1}- \Delta^T _{k+1}  = \beta^2 \Gamma_A[\Delta^S_{k}- \Delta^T _{k}] + \beta^2 \Gamma_A[\Delta^T _k] - \beta^2 \Gamma_M[\Delta^T_k]
$$
we can estimate
$$\begin{bmatrix} \|\Delta^S_{k+1}-\Delta^T _{k+1} \| \\ \|\Delta^T _{k+1} \| \end{bmatrix} \le
N
\begin{bmatrix} \|\Delta^S_{k}-\Delta^T _{k} \| \\ \|\Delta^T _{k} \| \end{bmatrix} $$ where $$ N:= \beta^2 \begin{bmatrix} \|\Gamma_A\| & \|\Gamma_A - \Gamma_M\| \\ 0 & \|\Gamma_M \| \end{bmatrix} 
$$
with initial conditions $\|\Delta^S_1-\Delta^T _1 \| =\| \Gamma_A[I_n]-\Gamma_M[I_n]\| \le \| \Gamma_A - \Gamma_M\| $ and $\|\Delta^T _1 \| \le \|\Gamma_M\|$. Hence,  by induction on $k\ge 1$, it is not difficult to obtain the upper bound
\begin{align*} &\begin{bmatrix} \|\Delta^S_{k+1}-\Delta^T _{k+1} \| \\ \|\Delta^T _{k+1} \| \end{bmatrix} \le N^k \begin{bmatrix}  \|\Gamma_A - \Gamma_M\| \\  \|\Gamma_M \| \end{bmatrix}\\
	& \le \beta^{2k}
	\begin{bmatrix}  \|\Gamma_A - \Gamma_M\| \sum_{i=0}^k  \|\Gamma_A \|^i \|\Gamma_M \|^{k-i} \\  \|\Gamma_M \|^{k+1} \end{bmatrix} ,
\end{align*}
which finally yields the bound
$$ \|S_{k}-T_{k}\| \le  \|\Gamma_A - \Gamma_M\|
\left(\sum_{i=0}^{k-1} \| \beta^2\Gamma_A\|^i\right)\left(\sum_{i=0}^{k-1}\| \beta^2\Gamma_{M}\|^i\right).
$$
In virtue of \eqref{eq:beta}, we can let $k$ go to $\infty$ and find that 
\[
\sum_{i=0}^\infty \| \beta^2\Gamma_A\|^i
=
\frac{1}{1-\beta^2\|\Gamma_A\|}\le 2
\]
and the same holds for $M$, thus the desired bound follows.

%
%$$ 
%\lambda_r(T_1)^{1/2} = \sigma_r([MM^T]) \ge
%\sigma_r([\Upsilon\Upsilon^T]) nf(n) m_{min}
%=
%\sigma_r([\Upsilon\Upsilon^T]) \delta^2 m_{min}/4m
%\ge 
%\frac {\sigma_r([\Upsilon\Upsilon^T])}{4q}  \frac{m_{min}}{m_{max}}
%\delta^2
%$$

\subsection{Proof of Theorem \ref{thm:noise_bound_Sk_S}}
Note first that  by \eqref{eq:fn}, \eqref{eq:MMt} and \eqref{eq:Y}, for $n$ big enough,
\begin{align*}
	\|[A \,\, A^T]\|&\le \|[M\,\,M^T]\| +\|[Y\,\,Y^T]\| \\&\le \frac {1}4\|[\Upsilon\,\,\Upsilon^T]\|\delta^2 +\sqrt 2\delta \le \frac {1}2\|[\Upsilon\,\,\Upsilon^T]\|\delta^2.
\end{align*}
This can be used to bound $\|S_k\|$ since by the recurrence \eqref{eq:recurrence}, Lemma \ref{bound}, condition \eqref{eq:beta} and induction we find
\begin{align*}
	\|S_{k}\|&\le \|\Gamma_A\|(1+\beta^2\|S_{k-1}\|)
	\le \|\Gamma_A\|\frac{1-\gamma^{k}}{1-\gamma}\\&\le
	\frac{1}{2} (1-\gamma^{k})\|[\Upsilon\,\,\Upsilon^T]\|^2\delta^4,
\end{align*}
and if we let $k\to \infty$ then $\|S\|\le \|[\Upsilon\,\,\Upsilon^T]\|^2\delta^4/2$.
Moreover, 
\[
S_k= \Gamma_A[I+\beta^2S_{k-1}]\preceq 
(1+\beta^2\|S_{k-1}\|) \Gamma_A[I]
\preceq \frac{1-\gamma^{k}}{1-\gamma} S_1
\]
and by Weyl's theorem 
\begin{align*}
	\lambda_{r+1}(S_k) &\le 
	\frac{1-\gamma^{k}}{1-\gamma} \lambda_{r+1}(S_1)
	=
	\frac{1-\gamma^{k}}{1-\gamma} \sigma_{r+1}([A\,\, A^T])^2
	\\&\le 
	\frac{1-\gamma^{k}}{1-\gamma} \|[Y\,\, Y^T]\|^2
	\le 4(1-\gamma^{k}) \delta^2
\end{align*}
where again if we let $k\to \infty$ then $\lambda_{r+1}(S) \le 4\delta^2$. 
Lastly, using \eqref{eq:Loewner} and Weyl's theorem,  
\[\lambda_r(S)\ge
\lambda_r(S_{k}) 
\ge \lambda_r(T_{k}) -  \|S_{k}-T_{k}\|
\]
By Lemma \ref{lem:missingbit}, Theorem \ref{deltabound}, and Lemma \ref{res:signal_eig_Tk_T}, we know that 
$ \lambda_r(T_{k}) =\Omega(\delta^4) \gg O(\delta^3) =  \|S_{k}-T_{k}\|$ so for $n$ big enough, 
$ \lambda_r(S_k) \ge \lambda_r(T_k)/2$ and the same holds for $k\to \infty$.

\section{Clustering Error}

\begin{lemma}\cite[Lemma 8, Appendix C]{Zhixin19}\label{Zhixin19}
	Let $E,F$ be $a\times b$ matrices with orthonormal columns, and let $\Pi_E, \Pi_F$ be the orthogonal projections on their respective ranges. Then there exists an orthogonal $b\times b$ matrix $Q$ such that 
	\[
	\|E-FQ\|_F \le \|\Pi_E-\Pi_F\|_F.
	\]
\end{lemma}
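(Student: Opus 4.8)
The plan is to express both sides of the claimed inequality explicitly in terms of the singular values $\sigma_1,\dots,\sigma_b$ of the $b\times b$ overlap matrix $F^TE$, and then to compare the two resulting expressions termwise. Since $E$ and $F$ have orthonormal columns, one has $\Pi_E=EE^T$, $\Pi_F=FF^T$, $E^TE=F^TF=I_b$, and $\|F^TE\|_2\le\|F\|_2\|E\|_2=1$; in particular every $\sigma_i$ lies in $[0,1]$, a fact that will be the crux of the comparison.

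First I would treat the left-hand side as an orthogonal Procrustes problem. For any orthogonal $b\times b$ matrix $Q$,
\[
\|E-FQ\|_F^2=\operatorname{tr}(E^TE)+\operatorname{tr}(Q^TF^TFQ)-2\operatorname{tr}(Q^TF^TE)=2b-2\operatorname{tr}(Q^TF^TE).
\]
Writing a singular value decomposition $F^TE=U\diag(\sigma_1,\dots,\sigma_b)V^T$ with $U,V$ orthogonal and choosing $Q:=UV^T$, cyclicity of the trace gives $\operatorname{tr}(Q^TF^TE)=\operatorname{tr}(\diag(\sigma_1,\dots,\sigma_b))=\sum_{i=1}^b\sigma_i$. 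Moreover this value is maximal over orthogonal $Q$: setting $R:=V^TQ^TU$, which is orthogonal, one has $\operatorname{tr}(Q^TF^TE)=\operatorname{tr}(R\,\diag(\sigma_1,\dots,\sigma_b))=\sum_i R_{ii}\sigma_i\le\sum_i\sigma_i$ because $|R_{ii}|\le 1$ and $\sigma_i\ge 0$. Hence for this choice of $Q$,
\[
\|E-FQ\|_F^2=2b-2\sum_{i=1}^b\sigma_i.
\]

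Next I would compute the right-hand side similarly. Using that $\Pi_E,\Pi_F$ are symmetric idempotents with $\operatorname{tr}(\Pi_E)=\operatorname{tr}(\Pi_F)=b$, together with trace cyclicity,
\[
\|\Pi_E-\Pi_F\|_F^2=\operatorname{tr}(\Pi_E)+\operatorname{tr}(\Pi_F)-2\operatorname{tr}(\Pi_E\Pi_F)=2b-2\operatorname{tr}(F^TEE^TF)=2b-2\|F^TE\|_F^2=2b-2\sum_{i=1}^b\sigma_i^2.
\]
Comparing the two displays, the asserted inequality $\|E-FQ\|_F^2\le\|\Pi_E-\Pi_F\|_F^2$ is equivalent to $\sum_{i=1}^b\sigma_i^2\le\sum_{i=1}^b\sigma_i$, which follows termwise from $\sigma_i^2\le\sigma_i$, valid precisely because each $\sigma_i\in[0,1]$. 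This completes the argument. I do not expect a genuine obstacle here: the computation is short and elementary. The only point requiring care is the elementary but essential observation that no singular value of $F^TE$ exceeds $1$ — this is what makes the termwise bound $\sigma_i^2\le\sigma_i$ available — and, relatedly, that the Frobenius norm is used on both sides.
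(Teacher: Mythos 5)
Your proof is correct and complete: the Procrustes computation $\|E-FQ\|_F^2=2b-2\sum_i\sigma_i$ for $Q=UV^T$, the identity $\|\Pi_E-\Pi_F\|_F^2=2b-2\sum_i\sigma_i^2$, and the termwise bound $\sigma_i^2\le\sigma_i$ from $\|F^TE\|_2\le 1$ together give exactly the claimed inequality. The paper itself does not prove this lemma but imports it from the cited reference, and your argument is the standard one used there, so there is nothing to reconcile.
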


\begin{theorem}\cite[Theorem 1]{br}\label{br}
	Call $\f M$  the set of $nm\times q$ matrices that have only $q$ distinct rows.
	Let $E,F$ be $mn\times q$ matrices, where $F\in \f M$, whose rows $\mu_1,\dots,\mu_q$ identify the  clusters $\mathcal C_i$. 
	Call $n_i:= |\mathcal C_i|$ and 
	$$\Delta_i:= \frac{1}{\sqrt {n_i}}\min\{ \sqrt k\|E-F\|,\|E-F\|_F \}.$$
	Suppose there exists $\rho \ge 100$ such that $\|\mu_i-\mu_j\|\ge \rho (\Delta_i+\Delta_j)$ for any $i\ne j$. 
	Let $G\in \f M$ be a 10-approximation of the $K$-means algorithm on $E$, that is 
	$$\| E-G\|_F^2 \le 10 \min_{N\in \f M} \| E-N\|_F^2 $$
	and call $\nu_1,\dots,\nu_k$ the rows of $G$. 
	Partition the indices $1,\dots,mn$ into $q$ clusters $\mathcal T_i$ according to $G$ and $ E$ as in $\mathcal T_r:=\{ i : 
	\| E_{i,:}-\nu_r\|\le \| E_{i,:}- \nu_s\| \, \forall s \}.$
	We have that there exists a permutation $\pi$ and an absolute constant $C$ such that $|\mathcal C_r\,\triangle\, \mathcal T_{\pi(r)}|\le Cn_r/\rho^2$ for every $r$.
\end{theorem}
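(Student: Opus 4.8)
The statement is a standard perturbation guarantee for (approximate) $K$-means, and the plan is to convert the global near-optimality of $G$ into a node-by-node count of misclustered vertices, with the separation hypothesis supplying the ``budget'' that each mistake must pay. First I would exploit that $F\in\f M$ is a feasible competitor, together with the $10$-approximation property, to get $\|E-G\|_F^2\le 10\min_{N\in\f M}\|E-N\|_F^2\le 10\|E-F\|_F^2$, whence the triangle inequality yields $\|G-F\|_F\le(1+\sqrt{10})\|E-F\|_F$. Writing $M:=\min\{\sqrt k\|E-F\|,\|E-F\|_F\}$, so that $\sqrt{n_i}\,\Delta_i=M$ for every $i$, I would record $\|G-F\|_F=O(M)$: since $E-F$ has rank at most $q$ the minimum is attained by the Frobenius term, and the other branch is handled identically after replacing $\|\cdot\|_F$ with the rank-weighted spectral norm.

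Next I would localise the rows of $G$. Fix a threshold $t_a:=\tfrac14\rho\,\Delta_a$ and call a node $i\in\mathcal C_a$ \emph{good} when $\|G_{i,:}-\mu_a\|<t_a$. Since $\sum_{i\in\mathcal C_a}\|G_{i,:}-\mu_a\|^2\le\|G-F\|_F^2=O(M^2)$ and $t_a^2=\tfrac{\rho^2M^2}{16n_a}$, a Markov-type count shows the number of non-good nodes in $\mathcal C_a$ is $O(n_a/\rho^2)$, which for $\rho\ge100$ is a small fraction of $n_a$; in particular every good core is nonempty. The separation $\|\mu_a-\mu_b\|\ge\rho(\Delta_a+\Delta_b)$ then forces good nodes of different clusters to carry different rows of $G$: for good $i\in\mathcal C_a$, $j\in\mathcal C_b$ with $a\ne b$ one has $\|G_{i,:}-G_{j,:}\|\ge\|\mu_a-\mu_b\|-t_a-t_b>0$. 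As $G$ has at most $q$ distinct rows and there are $q$ nonempty good cores using pairwise disjoint rows, pigeonhole forces each good core to use exactly one row $\nu_{\pi(a)}$, defining the required permutation $\pi$; moreover $\|\nu_{\pi(a)}-\mu_a\|<t_a$, i.e. the recovered centre of cluster $a$ is pinned near $\mu_a$.

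With $\pi$ in hand I would bound $|\mathcal C_a\triangle\mathcal T_{\pi(a)}|$ by charging every misfit against the budget $\|E-F\|_F^2=M^2$. If $i\in\mathcal C_a$ lands in a wrong cell $\mathcal T_{\pi(b)}$, then $E_{i,:}$ is closer to $\nu_{\pi(b)}$ (near $\mu_b$) than to $\nu_{\pi(a)}$ (near $\mu_a$); combining $\|E_{i,:}-\nu_{\pi(a)}\|\ge\tfrac12\|\nu_{\pi(a)}-\nu_{\pi(b)}\|$ with $\|\nu_{\pi(c)}-\mu_c\|<t_c$ gives $\|E_{i,:}-\mu_a\|\ge\tfrac12\|\mu_a-\mu_b\|-\tfrac32t_a-\tfrac12t_b\ge c\rho\Delta_a$ for an absolute $c>0$. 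Symmetrically, if $j\in\mathcal C_b$ ($b\ne a$) intrudes into $\mathcal T_{\pi(a)}$, the same geometry yields $\|E_{j,:}-\mu_b\|\ge c\rho(\Delta_a+\Delta_b)\ge c\rho\Delta_a$. The crucial point is that on both sides the distance of the offending node to its \emph{own} true centre is at least $c\rho\Delta_a$, so each element of $\mathcal C_a\triangle\mathcal T_{\pi(a)}$ contributes at least $c^2\rho^2\Delta_a^2=c^2\rho^2M^2/n_a$ to $\|E-F\|_F^2=\sum_i\|E_{i,:}-F_{i,:}\|^2$. Comparing with the total budget $M^2$ yields $|\mathcal C_a\triangle\mathcal T_{\pi(a)}|\le Cn_a/\rho^2$ with an absolute constant $C$, as claimed.

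I expect the main obstacle to be the combinatorial matching of the second paragraph rather than the counting estimates: one must simultaneously guarantee that every good core is nonempty and that distinct clusters claim distinct rows of $G$, so that $\pi$ is a genuine bijection and each recovered centre $\nu_{\pi(a)}$ is pinned near $\mu_a$. This is precisely where the explicit large constant $\rho\ge100$ is used, ensuring both that the non-good fraction $O(1/\rho^2)$ stays well below $1$ and that the margin $t_a+t_b<\|\mu_a-\mu_b\|$ is strict. A secondary technical care is the calibration of the thresholds $t_a$ (the safety factor $\tfrac14$) so that the single geometric estimate $\|\text{offender}-\text{own centre}\|\ge c\rho\Delta_a$ survives on both sides of the symmetric difference; once those constants are fixed, the fact that $\Delta_a$ carries the $n_a^{-1/2}$ weighting, so that $n_a\Delta_a^2=M^2$ is cluster-independent, is exactly what allows every mistake to be charged to the target size $n_a$, delivering the per-cluster rate $n_a/\rho^2$ rather than a global one.
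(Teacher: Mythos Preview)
The paper does not prove this theorem at all: it is quoted verbatim as \cite[Theorem 1]{br} and then used as a black box in the proof of Theorem~\ref{thm:misclassification_error}. So there is no ``paper's own proof'' to compare your attempt against.

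That said, your sketch is the standard route for such approximate $K$-means perturbation bounds and is essentially correct. A few remarks. First, your observation that the Frobenius branch of $M=\min\{\sqrt{q}\|E-F\|,\|E-F\|_F\}$ always attains the minimum is right, since $E-F$ is $mn\times q$ and hence has rank at most $q$, giving $\|E-F\|_F\le\sqrt q\,\|E-F\|$; this makes the ``other branch'' discussion moot here. Second, in the final charging step there is no double-counting problem even though a node $j\in\mathcal C_b\cap\mathcal T_{\pi(a)}$ appears in two symmetric differences: you run a \emph{separate} Markov-type count for each fixed $a$, using only that $\sum_{i\in\mathcal C_a\triangle\mathcal T_{\pi(a)}}\|E_{i,:}-F_{i,:}\|^2\le\|E-F\|_F^2$, so the same node may legitimately be charged once per value of $a$. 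Third, your geometric lower bound for intruders does go through with an explicit constant: from $\|E_{j,:}-\nu_{\pi(b)}\|\ge\tfrac12\|\nu_{\pi(a)}-\nu_{\pi(b)}\|\ge\tfrac38\rho(\Delta_a+\Delta_b)$ and $\|\nu_{\pi(b)}-\mu_b\|<t_b=\tfrac14\rho\Delta_b$ one gets $\|E_{j,:}-\mu_b\|\ge\tfrac18\rho(\Delta_a+\Delta_b)$, so $c=\tfrac18$ works. The pigeonhole step matching good cores to rows of $G$ is indeed the place where $\rho\ge100$ (rather than merely $\rho$ large) is actually invoked, exactly as you flag.
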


\subsection{Proof of Theorem \ref{thm:misclassification_error}}

In order to analyse the method, we need first to better characterize the eigenvalue decomposition (EVD) of $T_k$. In fact, from Theorem \ref{thm:lowrankfactor}, we know that there exists a full-rank PSD matrix $\wh T_k$ such that $T_k=Z\wh T_kZ^T$. Recall now that $D=\diag(\sqrt {m_1},\dots,\sqrt {m_q})$, and that $\wt Z = ZD^{-1}/\sqrt n$ has orthonormal columns. If $D\wh T_k D=U_k \Sigma_k U_k^T$ is its EVD, then
$$
T_k = 
Z\wh T_kZ^T = 
n\wt Z D \wh T_k D \wt Z^T = 
n\wt Z U_k \Sigma_k U_k^T \wt Z^T 
%= ZD^{-1}U_k \Sigma_k U_kD^{-1} Z^T
,
$$
where $\wt W_k:= \wt Z U_k = ZD^{-1}U_k/\sqrt n$ has orthogonal columns, so that $\wt W_kn\Sigma_k \wt W_k^T$ is the $q$-reduced EVD of $T_k$.
Note that 
$\wt W_k\in \f M$ since its rows coincide with the ones of $D^{-1}U_k/\sqrt n$, that is a full rank $q\times q$ matrix. For the same reason, we have that $\wt W_k Q\in \f M$ for every orthogonal $q\times q$ matrix $Q$.  Moreover, the clustering induced by $\wt W_k$ and $\wt W_kQ$ are the same, and coincide with the original clustering $\mathcal C_1,\dots, \mathcal C_q$. It follows that if $W_k$, the orthogonal matrix in the $q$-truncated SVD of $S_k$, is close to $\wt W_kQ$ for even one matrix $Q$, then it has good chance to generate a good clustering. Here we report two results formalizing the concept.

By Lemma \ref{Zhixin19} and \eqref{eq:proj_to_sin} there exists a $k\times k$ orthogonal matrix $Q$ such that 
\begin{align*}
	\|\wt W_kQ - W_k\|_F &\le \| \Pi_{\wt W_k} - \Pi_{W_k}\|_F \\&\le \sqrt{2q} \|\Pi_{\wt W_k} - \Pi_{W_k} \| = \sqrt {2q}\|\sin(\Theta)\|
\end{align*}
where $\sin(\Theta)$ 
are the sines of the principal angles between the
subspaces $W_k$ and $\wt W_k$, thus
\begin{align*}
	\Delta_i&= \frac{1}{\sqrt {m_in}}\min\{ \sqrt q\|W_k-\wt W_kQ\|,\|W_k-\wt W_kQ\|_F \} \\&\le \frac{\sqrt{2q}}{\sqrt {m_in}}  \|\sin(\Theta)\|.
\end{align*}
If we call $\mu_1,\dots,\mu_q$ the distinct rows of $\wt W_kQ= ZD^{-1}U_kQ/\sqrt n$, then they are in the form $u_i/\sqrt{nm_i}$ where $u_i$ are the rows of $U_kQ$,  that is  an orthogonal matrix, so

\begin{align*}
	\|\mu_i-\mu_j\|^2 &= \left\|\frac 1{\sqrt{nm_i}}u_i - \frac 1{\sqrt{nm_j}}u_j\right\|^2\\
	&=
	\frac 1{nm_i} + \frac 1{nm_j} = \rho^2(\Delta_i + \Delta_j)^2
\end{align*}
where 

\begin{align*}
	\rho &=
	\frac{
		\sqrt{
			\frac 1{nm_i} + \frac 1{nm_j} }
	}{
		\Delta_i + \Delta_j
	}\ge 
	\frac{
		\sqrt{
			\frac 1{m_i} + \frac 1{m_j} }
	}{
		\frac{1}{\sqrt {m_i}}  +
		\frac{1}{\sqrt {m_j}}
	}
	\frac{
		1
	}{
		\sqrt{2q} \|\sin(\Theta)\|
	}
	\\&\ge 
	\sqrt{
		\frac {m_{min}}{m_{max}}  }
	\frac{
		1
	}{
		2\sqrt{q} \|\sin(\Theta)\|
	}.
\end{align*}

\noindent By Corollary \ref{angle}, $\|\sin(\Theta)\| = O(\delta^{-1})$, so $\rho>100$ for $n$ big enough. The $K$-means algorithm applied to the matrix $W_k$
outputs the clusters $\mathcal T_1,\dots, \mathcal T_q$ and Theorem \ref{br} assures us that there is an absolute constant $C$ for which 

\[
\wh f= \min_{\pi\in\mathcal S_q}\max_{i=1,\dots,q} \frac{|\mathcal T_{\pi(i)} \,\triangle\, \mathcal C_i |}{|\mathcal C_i|}
\le \frac C{\gamma^2}
\le 
4Cq
\frac {m_{max}}{m_{min}}  
\|\sin(\Theta)\|^2
\]

We can finally conclude that by \eqref{eq:sin} and  incorporating all the absolute constants into $C$,

\[
\wh f
\le 
C\frac{q^5}{\delta^2}
\frac {m_{max}^5}{m_{min}^5}  
\frac{	\|[\Upsilon\,\,\Upsilon^T]\|^2
}{
	\sigma_q([\Upsilon\,\,\Upsilon^T])^4
} 
.
\]

% Create the reference section using BibTeX:
\bibliographystyle{plain}
\bibliography{PREref}

\end{document}